\def\Aut{\operatorname{Aut}}
\def\Prop{\operatorname{Prop}}
\def\rank{\operatorname{rank}}
\newtheorem{cor}{Corollary}
\newtheorem{lem}[cor]{Lemma}
\newtheorem{thm}[cor]{Theorem}
\theoremstyle{remark}
\newtheorem{rem}[cor]{Remark}
\theoremstyle{definition}
\title[Proper mappings between Hartogs triangles]{Proper holomorphic mappings between generalized Hartogs triangles}
\author{Pawe\l{} Zapa\l owski}
\address{Faculty of Mathematics and Computer Science, Jagiellonian University, \L o\-ja\-sie\-wi\-cza 6, 30-348 Krak\'ow, Poland}
\email{Pawel.Zapalowski@im.uj.edu.pl}
\thanks{The Author is partially supported by the Polish National Science Center (NCN) grant UMO-2014/15/D/ST1/01972}
\subjclass[2010]{32H35}
\keywords{generalized Hartogs triangle, proper holomorphic mapping, group of automorphisms, complex ellipsoid}
\begin{document}

\begin{abstract}Answering all questions---concerning proper holomorphic mappings between generalized Hartogs triangles---posed by Jarnicki and Plfug (First steps in several complex variables: Reinhardt domains, 2008) we characterize the existence of proper holomorphic mappings between generalized Hartogs triangles and give their explicit form. In particular, we completely describe the group of holomorphic automorphisms of such domains and establish rigidity of proper holomorphic self-mappings on them.
\end{abstract}

\maketitle

\section{Introduction}
In the paper we study the proper holomorphic mappings between the generalized Hartogs triangles of equal dimensions (see definition below) giving full characterization of the existence of such mappings, their explicit form, and the complete description of the group of holomorphic automorphisms of such domains. Our results answer all questions posed by Jarnicki and Pflug in \cite{jarnicki2008}, Sections 2.5.2 and 2.5.3, concerning proper holomorphic mappings between generalized Hartogs triangles and holomorphic automorphisms of such domains.

Let us recall the definition of the above mentioned domains. Let $n,m\in\mathbb N$. For $p=(p_1,\dots,p_n)\in\mathbb R_{>0}^n$ and $q=(q_1,\dots,q_m)\in\mathbb R_{>0}^m$ define the \emph{generalized Hartogs triangle} as
\begin{equation*}
\mathbb F_{p,q}:=\Big\{(z,w)\in\mathbb C^n\times\mathbb C^m:\sum_{j=1}^n|z_j|^{2p_j}<\sum_{j=1}^m|w_j|^{2q_j}<1\Big\}.
\end{equation*}
Note that $\mathbb F_{p,q}$ is not smooth, pseudoconvex, Reinhardt domain, with the origin on the boundary. Moreover, if $n=m=1$, then $\mathbb F_{1,1}$ is the standard Hartogs triangle.

Let $p\in\mathbb R^n_{>0}$, $q\in\mathbb R^m_{>0}$ and $\tilde p\in\mathbb R^{\tilde n}_{>0}$, $\tilde q\in\mathbb R^{\tilde m}_{>0}$. We say that two generalized Hartogs triangles $\mathbb F_{p,q}$ and $\mathbb F_{\tilde p,\tilde q}$ are \emph{equidimensional}, if $n=\tilde n$ and $m=\tilde m$.

The problem of characterization of proper holomorphic mappings
\begin{equation}\label{eq:proper}
\mathbb F_{p,q}\longrightarrow\mathbb F_{\tilde p,\tilde q}
\end{equation}
and the group $\Aut(\mathbb F_{p,q})$ of holomorphic automorphisms of $\mathbb F_{p,q}$ has been investigated in many papers (see, e.g.,~\cite{landucci1989193}, \cite{chen2001177}, \cite{chen2002357}, \cite{chen200474}, \cite{chen2003215} for equidimensional case and \cite{chen2008557} for nonequidimensional case). It was Landucci, who considered the mappings (\ref{eq:proper}) first in 1989 as an example of proper holomorphic mappings between not smooth, pseudoconvex, Reinhardt domains, with the origin on the boundary, which do not satisfy a regularity property for the Bergman projection (the so-called $R$-condition). In \cite{landucci1989193} he gave complete characterization of the existence and the explicit form of the mappings (\ref{eq:proper}) in case $m=1$, $p,\tilde p\in\mathbb N^n$, and $q,\tilde q\in\mathbb N$. Then, in 2001 Chen and Xu (cf.~\cite{chen2001177}) characterized the existence of the mappings (\ref{eq:proper}) in case $n>1$, $m>1$, $p,\tilde p\in\mathbb N^n$, and $q,\tilde q\in\mathbb N^m$. Next step was made one year later, when the same Authors fully described proper holomorphic self-mappings of $\mathbb F_{p,q}$ for $n>1$, $m>1$, $p\in\mathbb N^n$, and $q\in\mathbb N^m$ (cf.~\cite{chen2002357}). In the same year, Chen in \cite{chen200474} characterized the existence of the mappings (\ref{eq:proper}) in case $n>1$, $m>1$, $p,\tilde p\in\mathbb R_{>0}^n$, and $q,\tilde q\in\mathbb R_{>0}^m$. Finally, Chen and Liu in 2003 gave the explicit form of proper holomorphic mappings $\mathbb F_{p,q}\longrightarrow\mathbb F_{\tilde p,\tilde q}$ but only for $n>1$, $m>1$, $p,\tilde p\in\mathbb N^n$, and $q,\tilde q\in\mathbb N^m$ (cf.~\cite{chen2003215}).

We emphasize that Landucci considered only the case $m=1$ with exponents being positive integers, whereas Chen, Xu, and Liu obtained some partial results with positive integer or arbitrary real positive exponents under general assumption $n\geq 2$ and $m\geq2$. Consequently, their results are far from being conclusive for the general case.

The main aim of this note is to give complete characterization of the existence of mappings (\ref{eq:proper}), where $n,m\in\mathbb N$, $p,\tilde p\in\mathbb R^n_{>0}$, $q,\tilde q\in\mathbb R^m_{>0}$, their explicit form, and the description of the group $\Aut(\mathbb F_{p,q})$ (cf.~Theorems~\ref{thm:n1m1}, \ref{thm:n1m2}, \ref{thm:n2m1}, and \ref{thm:n2m2}) for arbitrary dimensions and arbitrary positive real exponents. In particular, we obtain a classification theorem on rigidity of proper holomorphic self-mappings of generalized Hartogs triangles (cf.~Corollary~\ref{cor:rigid}).

It is worth pointing out that in the general case neither Landucci's method from \cite{landucci1989193} (where the assumption $p,\tilde p\in\mathbb N^n$, $q,\tilde q\in\mathbb N$ is essential) nor Chen's approach from \cite{chen200474} (where the proof strongly depends on the assumption $m\geq2$) can be used.

The paper is organized as follows. We start with stating the main results. For the convenience of the Reader we split them into four theorems with respect to the dimensions of the relevant parts of $\mathbb F_{p,q}$. Next we shall discuss the proper holomorphic mappings between complex ellipsoids (cf.~Section~\ref{sect:ce}) which will turn out to be quite useful in the sequel and may be interesting in its own right. The boundary behavior of the the mappings (\ref{eq:proper}) will also be studied. In the last section, making use of the description of proper holomorphic mappings between complex ellipsoids (Theorem~\ref{thm:ep}) and the boundary behavior of proper holomorphic mappings between generalized Hartogs triangles (Lemma~\ref{lem:kl}), we shall prove our main results.

Here is some notation. Throughout the paper  $\mathbb D$ denotes the unit disc in the complex plane, additionally by $\mathbb T$ we shall denote the unit circle, $\partial D$ stands for the boundary of the bounded domain $D\subset\mathbb C^n$. Let $\Sigma_n$ denote the group of the permutations of the set $\{1,\dots,n\}$. For $\sigma\in\Sigma_n,\ z=(z_1,\dots,z_n)\in\mathbb C^n$ denote $z_{\sigma}:=(z_{\sigma(1)},\dots,z_{\sigma(n)})$ and $\Sigma_n(z):=\{\sigma\in\Sigma_n:z_{\sigma}=z\}$. We shall also write $\sigma(z):=z_{\sigma}$. For $\alpha=(\alpha_1,\dots,\alpha_n)\in\mathbb R^n_{>0}$ and $\beta=(\beta_1,\dots,\beta_n)\in\mathbb R^n_{>0}$ we shall write $\alpha\beta:=(\alpha_1\beta_1,\dots,\alpha_n\beta_n)$ and $1/\beta:=(1/\beta_1,\dots,1/\beta_n)$. If, moreover, $\alpha\in\mathbb N^n$, then
$$
\Psi_{\alpha}(z):=z^{\alpha}:=(z_1^{\alpha_1},\dots,z_n^{\alpha_n}),\quad z=(z_1,\dots,z_n)\in\mathbb C^n.
$$
For $\lambda\in\mathbb C$, $A\subset\mathbb C^n$ let $\lambda A:=\{\lambda a:a\in A\}$ and $A_*:=A\setminus\{0\}$. Finally, let $\mathbb U(n)$ denote the set of unitary mappings $\mathbb C^n\longrightarrow\mathbb C^n$.

\section{Main results}

We start with the generalized Hartogs triangles of lowest dimension.

\begin{thm}\label{thm:n1m1}Let $n=m=1$, $p,q,\tilde p,\tilde q\in\mathbb R_{>0}$.
\begin{enumerate}[(a)]
\item\label{item:n1m1exist}There exists a proper holomorphic mapping $\mathbb F_{p,q}\longrightarrow\mathbb F_{\tilde p,\tilde q}$ if and only if there exist $k,l\in\mathbb N$ such that $$
    \frac{l\tilde q}{\tilde p}-\frac{kq}{p}\in\mathbb Z.
    $$
\item\label{item:n1m1form}A mapping $F:\mathbb F_{p,q}\longrightarrow\mathbb F_{\tilde p,\tilde q}$ is proper and holomorphic if and only if
    $$
    F(z,w)=\begin{cases}\left(\zeta z^kw^{l\tilde q/\tilde p-kq/p},\xi w^l\right),\quad&\textnormal{if }q/p\notin\mathbb Q\\\left(\zeta z^{k'}w^{l\tilde q/\tilde p-k'q/p}B\left(z^{p'}w^{-q'}\right),\xi w^l\right),\hfill&\textnormal{if }q/p\in\mathbb Q\end{cases},\quad(z,w)\in\mathbb F_{p,q},
    $$
    where $\zeta,\xi\in\mathbb T$, $k,l\in\mathbb N$, $k'\in\mathbb N\cup\{0\}$ are such that $l\tilde q/\tilde p-kq/p\in\mathbb Z$, $l\tilde q/\tilde p-k'q/p\in\mathbb Z$, $p',q'\in\mathbb N$ are relatively prime with $p/q=p'/q'$, and $B$ is a finite Blaschke product non-vanishing at 0 (if $B\equiv1$, then $k'>0$).

    In particular, there are non-trivial proper holomorphic self-mappings in $\mathbb F_{p,q}$.
\item\label{item:n1m1aut}$F\in\Aut(\mathbb F_{p,q})$ if and only if
    $$
    F(z,w)=\left(w^{q/p}\phi\left(zw^{-q/p}\right),\xi w\right),\quad(z,w)\in\mathbb F_{p,q},
    $$
    where $\xi\in\mathbb T$, and $\phi\in\Aut(\mathbb D)$ (moreover, $\phi(0)=0$ whenever $q/p\notin\mathbb N$).
\end{enumerate}
\end{thm}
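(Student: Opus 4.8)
The plan is to exploit the fibration of $\mathbb F_{p,q}$ over the punctured disc by the $w$-coordinate and reduce everything to one-variable statements about proper self-maps of $\mathbb D$ and $\mathbb D_*$. Sufficiency in \eqref{item:n1m1form} and \eqref{item:n1m1aut} --- that the displayed maps are proper holomorphic, resp.\ automorphisms --- is a direct verification: one checks that the stated integrality conditions ($l\tilde q/\tilde p-kq/p\in\mathbb Z$, and $q/p\in\mathbb N$ when $\phi(0)\neq0$) make the formulas single-valued on $\mathbb F_{p,q}$, that they map into the target, and that they are proper, the automorphisms being invertible with inverse of the same shape. So the substance is necessity, which I would treat as follows.

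First I would write $F=(f,g)$ and determine $g$. Since every point of $\mathbb F_{\tilde p,\tilde q}$ has nonvanishing last coordinate, $g$ takes values in $\mathbb D_*$. Invoking Lemma~\ref{lem:kl} on the boundary behaviour of $F$ --- in particular that $F$ respects the splitting of $\partial\mathbb F_{p,q}$ into its outer part $\{|w|=1\}$ and its diagonal part --- I would conclude that $g$ is independent of $z$ and that the induced map $\mathbb D_*\ni w\mapsto g(w)\in\mathbb D_*$ is proper. A bounded proper self-map of $\mathbb D_*$ extends holomorphically across the puncture by the removable singularity theorem, must fix $0$ by properness, and is nonvanishing on $\mathbb D_*$; hence $g(w)=\xi w^l$ with $\xi\in\mathbb T$, $l\in\mathbb N$. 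This settles the second component.

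Next I would analyse the fibers. For fixed $w\in\mathbb D_*$ the map $z\mapsto f(z,w)$ sends the disc $\{|z|<|w|^{q/p}\}$ properly onto $\{|f|<|g(w)|^{\tilde q/\tilde p}\}=\{|f|<|w|^{l\tilde q/\tilde p}\}$, so after the formal normalisation $u=zw^{-q/p}$ it becomes a finite Blaschke product $B_w$ whose degree, being a discrete invariant varying holomorphically, is constant in $w$; thus $f=w^{l\tilde q/\tilde p}B_w(zw^{-q/p})$. The crux is to turn this fiberwise description into a single global formula: expanding $B_w(u)=\sum_n a_n(w)u^n$ gives $f=\sum_n a_n(w)\,z^n\,w^{l\tilde q/\tilde p-nq/p}$, and holomorphicity together with single-valuedness of $f$ on $\mathbb F_{p,q}$ forces, for each surviving $n$, triviality of the monodromy around the puncture $w\mapsto e^{2\pi i}w$. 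This is precisely where the condition $l\tilde q/\tilde p-nq/p\in\mathbb Z$ enters and where the dichotomy on $q/p$ appears: if $q/p\notin\mathbb Q$ the admissible exponents single out one value $n=k$, so $B_w(u)=\zeta u^k$ and $f=\zeta z^k w^{l\tilde q/\tilde p-kq/p}$; if $q/p=p'/q'\in\mathbb Q$ in lowest terms, then $z^{p'}w^{-q'}$ is a genuine single-valued holomorphic function on $\mathbb F_{p,q}$ with values in $\mathbb D$, which may be fed into an arbitrary Blaschke product, yielding $f=\zeta z^{k'}w^{l\tilde q/\tilde p-k'q/p}B(z^{p'}w^{-q'})$. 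This establishes \eqref{item:n1m1form}, and \eqref{item:n1m1exist} is its existence part: a proper map exists iff the exponent constraint is solvable in $k,l\in\mathbb N$. For \eqref{item:n1m1aut} I would specialise to $\tilde p=p$, $\tilde q=q$ and impose bijectivity: the covering degree, the product of the base degree $l$ and the fiber degree, must equal $1$, forcing $l=1$ and $B_w\in\Aut(\mathbb D)$; writing $B_w=\phi$ recovers $F(z,w)=(w^{q/p}\phi(zw^{-q/p}),\xi w)$, and the single-valuedness computation shows $\phi$ must fix $0$ unless $q/p\in\mathbb N$.

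The main obstacle will be the passage from the fiberwise Blaschke products $B_w$ to a single globally defined, single-valued holomorphic $f$, i.e.\ controlling the holomorphic dependence of the coefficients $a_n(w)$ and extracting exactly the admissible forms in each regime for $q/p$. The rational/irrational dichotomy must be handled carefully: one has to verify that in the irrational case no nontrivial Blaschke factor can be inserted without destroying single-valuedness, while in the rational case $z^{p'}w^{-q'}$ supplies the unique building block that can. The removable-singularity and degree bookkeeping for $g$, and the reduction to bijectivity in \eqref{item:n1m1aut}, are then comparatively routine.
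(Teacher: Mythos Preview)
Your plan has a genuine gap at the very first step: you invoke Lemma~\ref{lem:kl} to obtain the boundary splitting $F(K_{p,q})\subset K_{\tilde p,\tilde q}$, $F(L_{p,q})\subset L_{\tilde p,\tilde q}$ and then the independence of $g$ on $z$, but Lemma~\ref{lem:kl} is stated and proved only under the hypothesis $nm\neq 1$, so it does not apply here. The obstruction is intrinsic, not a mere omission: the argument behind Lemma~\ref{lem:kl} (both the paper's and the earlier results of Landucci and Chen it quotes) rests on one of the boundary pieces being non--Levi-flat, whereas for $n=m=1$ both $K_{p,q}$ and $L_{p,q}$ are smooth Levi-flat hypersurfaces in $\mathbb C^2$. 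Hence nothing in the paper supplies the boundary invariance or the reduction $g=g(w)$ in this dimension, and without those your fiberwise analysis cannot get off the ground.

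The paper takes a different route and never uses Lemma~\ref{lem:kl} in the proof of Theorem~\ref{thm:n1m1}. It splits an arbitrary proper holomorphic $F$ into two cases: if $F$ is an \emph{elementary algebraic} (monomial) map $(\alpha z^a w^b,\beta z^c w^d)$, a direct computation with the defining inequalities forces $c=0$, $d=l\in\mathbb N$, $\beta\in\mathbb T$, $a=k\in\mathbb N$, $\alpha\in\mathbb T$, and $b=l\tilde q/\tilde p-kq/p\in\mathbb Z$; if $F$ is not elementary, the paper quotes Theorem~0.1 of Isaev--Kruzhilin \cite{isaev200633}, a complete classification of non-elementary proper holomorphic maps between bounded Reinhardt domains in $\mathbb C^2$, which already delivers the form $\big(\alpha z^a w^b\tilde B(z^{p'}w^{-q'}),\beta w^l\big)$ together with the relations \eqref{eq:isakru}, and the remaining work is only matching constants. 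If you wish to salvage your direct approach without this black box, you must first supply an independent proof that the second component is a function of $w$ alone in the two-dimensional case; once $g(w)=\xi w^l$ is secured, your monodromy/single-valuedness argument is a plausible route to the dichotomy on $q/p$, though the passage from the locally defined Blaschke products $B_w$ to a global single-valued $f$ still needs to be written out with care.
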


\begin{rem}(a) The counterpart of the Theorem~\ref{thm:n1m1} for $p,q,\tilde p,\tilde q\in\mathbb N$ was proved (with minor mistakes) in \cite{landucci1989193}, where it was claimed that a mapping $F:\mathbb F_{p,q}\longrightarrow\mathbb F_{\tilde p,\tilde q}$ is proper and holomorphic if and only if
\begin{equation}\label{eq:landucci}
F(z,w)=\begin{cases}\left(\zeta z^kw^{l\tilde q/\tilde p-kq/p},\xi w^l\right),\quad&\textnormal{if }q/p\notin\mathbb N,\ l\tilde q/\tilde p-kq/p\in\mathbb Z\\\left(\zeta w^{l\tilde q/\tilde p}B\left(zw^{-q/p}\right),\xi w^l\right),\hfill&\textnormal{if }q/p\in\mathbb N,\ l\tilde q/\tilde p\in\mathbb N\end{cases},
\end{equation}
where $\zeta,\xi\in\mathbb T$, $k,l\in\mathbb N$, and $B$ is a finite Blaschke product. Nevertheless, the mapping
$$
\mathbb F_{2,3}\ni(z,w)\longmapsto\left(z^3w^3B\left(z^2w^{-3}\right),w^3\right)\in\mathbb F_{2,5},
$$
where $B$ is non-constant finite Blaschke product non-vanishing at 0, is proper holomorphic but not of the form (\ref{eq:landucci}). In fact, from the Theorem~\ref{thm:n1m1}~(\ref{item:n1m1form}) it follows immediately that for any choice of $p,q,\tilde p,\tilde q\in\mathbb N$ one may find a proper holomorphic mapping $F:\mathbb F_{p,q}\longrightarrow\mathbb F_{\tilde p,\tilde q}$ having, as a factor of the first component, non-constant Blaschke product non-vanishing at 0.

(b) Theorems~\ref{thm:n1m1}~(\ref{item:n1m1exist}), (\ref{item:n1m1form}) give a positive answer (modulo Landucci's mistake) to the question posed by Jarnicki and Pflug (cf.~\cite{jarnicki2008}, Remark~2.5.22\,(a)).

(c) Theorem~\ref{thm:n1m1}~(\ref{item:n1m1aut}) gives a positive answer to the question posed by Jarnicki and Pflug (cf.~\cite{jarnicki2008}, Remark~2.5.15\,(b)) in case $n=1$.
\end{rem}

\begin{thm}\label{thm:n1m2}Let $n=1$, $m\geq2$, $p,\tilde p\in\mathbb R_{>0}$, $q,\tilde q\in\mathbb R^m_{>0}$.
\begin{enumerate}[(a)]
\item\label{item:n1m2exist}There exists a proper holomorphic mapping $\mathbb F_{p,q}\longrightarrow\mathbb F_{\tilde p,\tilde q}$ if and only if there exists $\sigma\in\Sigma_m$ such that
    $$
    \frac{p}{\tilde p}\in\mathbb N\quad\text{and}\quad\frac{q_{\sigma}}{\tilde q}\in\mathbb N^m.
    $$
\item\label{item:n1m2form}A mapping $F:\mathbb F_{p,q}\longrightarrow\mathbb F_{\tilde p,\tilde q}$ is proper and holomorphic if and only if
    $$
    F(z,w)=(\zeta z^k,h(w)),\quad(z,w)\in\mathbb F_{p,q},
    $$
    where $\zeta\in\mathbb T$, $k\in\mathbb N$, and $h:\mathbb E_q\longrightarrow\mathbb E_{\tilde q}$ is proper and holomorphic such that $h(0)=0$ (cf.~Theorem~\ref{thm:ep}).

In particular, there are non-trivial proper holomorphic self-mappings in $\mathbb F_{p,q}$.
\item\label{item:n1m2aut}$F\in\Aut(\mathbb F_{p,q})$ if and only if
    $$
    F(z,w)=(\zeta z,h(w)),\quad(z,w)\in\mathbb F_{p,q},
    $$
    where $\zeta\in\mathbb T$, $h\in\Aut(\mathbb E_q)$, $h(0)=0$ (cf.~Theorem~\ref{thm:ep}).
\end{enumerate}
\end{thm}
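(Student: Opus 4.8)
The plan is to establish the normal form (b) first and to read off (a) and (c) from it. The sufficiency in (a) is explicit: given $\sigma\in\Sigma_m$ with $p/\tilde p\in\mathbb N$ and $q_\sigma/\tilde q\in\mathbb N^m$, the mapping $(z,w)\mapsto(z^{p/\tilde p},\Psi_{q_\sigma/\tilde q}(w_\sigma))$ is holomorphic, and a direct substitution using $|z^{p/\tilde p}|^{2\tilde p}=|z|^{2p}$ together with $\sum_{j}|w_{\sigma(j)}|^{2q_{\sigma(j)}}=\sum_j|w_j|^{2q_j}$ shows it maps $\mathbb F_{p,q}$ into $\mathbb F_{\tilde p,\tilde q}$ as a finite holomorphic surjection, hence properly. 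Everything else will follow from (b) via the description of proper maps $\mathbb E_q\to\mathbb E_{\tilde q}$ in Theorem~\ref{thm:ep}.

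To prove (b), I would fix a proper holomorphic $F=(f,g)\colon\mathbb F_{p,q}\to\mathbb F_{\tilde p,\tilde q}$ and work with the canonical fibration $(z,w)\mapsto w$ over the punctured ellipsoid $(\mathbb E_q)_*$, whose fiber over $w$ is the disc $\{z:|z|^{2p}<\sum_j|w_j|^{2q_j}\}$. Using the boundary behavior supplied by Lemma~\ref{lem:kl} I would first show that $F$ respects the two boundary strata, carrying the outer boundary $\{\sum_j|w_j|^{2q_j}=1\}$ to its target counterpart and the central slice $\{z=0\}$ into $\{\tilde z=0\}$; in particular $f(0,w)\equiv0$. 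Then $w\mapsto g(0,w)$ is a bounded holomorphic map $(\mathbb E_q)_*\to\mathbb E_{\tilde q}$, and here the standing hypothesis $m\geq2$ is essential: the omitted point $0$ has codimension at least two, so Riemann's removable singularity theorem extends it to a holomorphic $h\colon\mathbb E_q\to\mathbb E_{\tilde q}$. Tracking outer boundary to outer boundary shows $h$ is proper, and letting $w\to0$, so that $(0,w)$ tends to the boundary point $(0,0)$, forces $h(0)=0$.

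The heart of the matter is to propagate this information off the slice, i.e. to prove that $F$ preserves the fibration: $g(z,w)=h(w)$ is independent of $z$, and $F$ maps the fiber over $w$ properly onto the fiber over $h(w)$. The mechanism I would use is that the fibers are exactly the maximal analytic discs whose entire boundary lies on the inner stratum $\{|z|^{2p}=\sum_j|w_j|^{2q_j}<1\}$; by Lemma~\ref{lem:kl} this stratum is preserved, so the image of a fiber is again such a disc and therefore a target fiber. The restriction of $f$ to a fiber is then a proper map of discs, that is a finite Blaschke product vanishing at the center (since $f(0,w)=0$); crucially, because $\sum_j|w_j|^{2q_j}$ is not the modulus of a holomorphic function when $m\geq2$, there is no holomorphic fiber coordinate and hence no holomorphically varying off-center zero, so all zeros collapse to the center and $f(z,w)=\zeta z^k$. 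This is precisely what distinguishes the present case from Theorem~\ref{thm:n1m1}, where such a Blaschke factor does occur; establishing this fibered rigidity is the step I expect to be the main obstacle, since it is where the passage from $m=1$ to $m\geq2$ genuinely changes the geometry.

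Finally, applying Theorem~\ref{thm:ep} to the proper map $h$ with $h(0)=0$ yields a permutation $\sigma$ with $q_\sigma/\tilde q\in\mathbb N^m$ together with the identity $\sum_j|h_j(w)|^{2\tilde q_j}=\sum_j|w_j|^{2q_j}$. Comparing the radius of the fiber over $w$ with that of its image fiber over $h(w)$ then gives $t^{k/(2p)}=t^{1/(2\tilde p)}$ for $t=\sum_j|w_j|^{2q_j}\in(0,1)$, whence $p/\tilde p=k\in\mathbb N$, completing the necessity in (a). For (c) one specializes to $\tilde p=p$, $\tilde q=q$ with $F$ bijective: injectivity forces $k=1$ and turns $h$ into an element of $\Aut(\mathbb E_q)$ fixing $0$, giving $F(z,w)=(\zeta z,h(w))$ as claimed.
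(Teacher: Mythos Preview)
Your architecture---separate the $w$-part, obtain fiberwise Blaschke products, collapse them---is the right shape, and (a), (c) do follow from (b). But the two hard steps have gaps, and the paper handles each differently. For fiber preservation, your characterization of the fibers as ``the maximal analytic discs with boundary on $K_{p,q}$'' fails: with $p=q_1=q_2=1$, the disc $\lambda\mapsto(a\lambda,b\lambda,c)$ for nonzero $a,b,c$ with $|a|^2=|b|^2+|c|^2<1$ lies in $\mathbb F_{p,q}$ with boundary entirely on $K_{p,q}$ but is not a fiber, so ``image of a fiber is such a disc, hence a target fiber'' does not follow. Your earlier claim $f(0,w)\equiv0$ is likewise unsupported at that stage, since $(0,w)$ is an interior point and Lemma~\ref{lem:kl} says nothing there. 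The paper sidesteps this by invoking Lemma~2.2 of \cite{chen200474} (noting that its proof still works for $n=1$) to conclude directly that $H$ is independent of $z$; only then does $G(\cdot,w)$ become a proper map of discs.

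For collapsing the Blaschke product to $\zeta z^k$, your remark that $\rho_w^{1/(2p)}$ is not holomorphic is a heuristic, not a proof---one must still control the degree, the unimodular prefactor, and the zero locus jointly in $w$. The paper's device is different and concrete: restrict to the slice $\mathbb F^0_{p,q}$ where only $w_{\sigma(1)}\neq0$, apply Theorem~\ref{thm:n1m1} to this genuinely two-dimensional Hartogs triangle to obtain an explicit formula for $G$ on the slice, and then use that $G$ must remain holomorphic across $\{w_{\sigma(1)}=0\}$---possible exactly because $m\geq2$---to force the $w_{\sigma(1)}$-exponent to be nonnegative and the residual Blaschke factor to be a pure power. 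Matching this against the fiberwise form $\rho_w^{1/(2\tilde p)}B\bigl(z\rho_w^{-1/(2p)}\bigr)$ on the slice yields $B(t)=\zeta t^k$, $k=p/\tilde p$, and shows $G$ is independent of $w$. This slice-then-extend argument, not the non-holomorphic radius heuristic, is where the hypothesis $m\geq2$ is actually used.
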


Our next result is the following

\begin{thm}\label{thm:n2m1}Let $n\geq2$, $m=1$, $p=(p_1,\dots,p_n),\tilde p=(\tilde p_1,\dots,\tilde p_n)\in\mathbb R_{>0}^n$, $q,\tilde q\in\mathbb R_{>0}$.
\begin{enumerate}[(a)]
\item\label{item:n2m1exist}There exists a proper holomorphic mapping $\mathbb F_{p,q}\longrightarrow\mathbb F_{\tilde p,\tilde q}$ if and only if there exist $\sigma\in\Sigma_n$ and $r\in\mathbb N$ such that
    $$
    \frac{p_{\sigma}}{\tilde p}\in\mathbb N^n\quad\text{and}\quad\frac{r\tilde q-q}{\tilde p_j}\in\mathbb Z,\quad j=1,\dots,n.
    $$
\item\label{item:n2m1form}A mapping $F=(G_1,\dots,G_n,H):\mathbb F_{p,q}\longrightarrow\mathbb F_{\tilde p,\tilde q}$ is proper and holomorphic if and only if
    \begin{equation*}
    \begin{cases}G_j(z,w)=w^{r\tilde q/\tilde p_j}f_j\left(z_1w^{-q/p_1},\dots,z_nw^{-q/p_n}\right),\quad j=1,\dots,n,\\ H(z,w)=\xi w^r,\end{cases},\quad (z,w)\in\mathbb F_{p,q},
    \end{equation*}
    where $(f_1,\dots,f_n):\mathbb E_p\longrightarrow\mathbb E_{\tilde p}$ is proper and holomorphic (cf.~Theorem~\ref{thm:ep}), $\xi\in\mathbb T$, and $r\in\mathbb N$ is such that $(r\tilde q-q)/\tilde p_j\in\mathbb Z$, $j=1,\dots,n$. Moreover, if there is a $j$ such that $1/\tilde p_j\in\mathbb N$, then $q\in\mathbb N$ and $r\tilde q/\tilde p_j\in\mathbb N$ whenever $1/\tilde p_j\in\mathbb N$.

In particular, there are non-trivial proper holomorphic self-mappings in $\mathbb F_{p,q}$.
\item\label{item:n2m1aut}$F=(G_1,\dots,G_n,H)\in\Aut(\mathbb F_{p,q})$ if and only if
    \begin{equation*}
    \begin{cases}G_j(z,w)=w^{q/p_j}g_j\left(z_1w^{-q/p_1},\dots,z_nw^{-q/p_n}\right),\quad j=1,\dots,n,\\ H(z,w)=\xi w,\end{cases},\quad (z,w)\in\mathbb F_{p,q},
    \end{equation*}
    where $(g_1,\dots,g_n)\in\Aut(\mathbb E_p)$ (cf.~Theorem~\ref{thm:ep}), $\xi\in\mathbb T$.
\end{enumerate}
\end{thm}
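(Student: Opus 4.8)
The plan is to reduce the problem to the already-understood proper holomorphic mappings between the complex ellipsoids $\mathbb{E}_p$ and $\mathbb{E}_{\tilde p}$ (Theorem~\ref{thm:ep}) by exploiting the fibration of $\mathbb{F}_{p,q}$ over the punctured disc given by $\pi(z,w)=w$, whose fiber over $w$ is the scaled ellipsoid $\{z:\sum_j|z_j|^{2p_j}<|w|^{2q}\}$, a rescaled copy of $\mathbb{E}_p$. The fiber coordinates $\zeta_i=z_iw^{-q/p_i}$ make this precise: $(z,w)\in\mathbb{F}_{p,q}$ corresponds to $(\zeta,w)$ with $\zeta\in\mathbb{E}_p$ and $w\in\mathbb{D}_*$.

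First I would treat sufficiency. Given a proper holomorphic $(f_1,\dots,f_n):\mathbb{E}_p\to\mathbb{E}_{\tilde p}$, $\xi\in\mathbb{T}$ and $r\in\mathbb{N}$ with $(r\tilde q-q)/\tilde p_j\in\mathbb{Z}$, one checks that the stated $F$ maps into $\mathbb{F}_{\tilde p,\tilde q}$, since $\sum_j|G_j|^{2\tilde p_j}=|w|^{2r\tilde q}\sum_j|f_j(\zeta)|^{2\tilde p_j}<|w|^{2r\tilde q}=|H|^{2\tilde q}<1$, where $\zeta_i=z_iw^{-q/p_i}\in\mathbb{E}_p$. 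Well-definedness of the a priori multivalued $G_j$ follows from the explicit monomial shape of the $f_j$ furnished by Theorem~\ref{thm:ep} together with $p_\sigma/\tilde p\in\mathbb{N}^n$ and $(r\tilde q-q)/\tilde p_j\in\mathbb{Z}$: the monodromy of $w^{-q/p_i}$ inside $f_j$ is cancelled exactly by the prefactor $w^{r\tilde q/\tilde p_j}$. Properness is read off from the boundary regimes $|w|\to1$, $w\to0$, and $\zeta\to\partial\mathbb{E}_p$, using properness of $f$.

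For necessity, let $F=(G_1,\dots,G_n,H)$ be proper and holomorphic. The heart of the argument is to show that $H$ depends only on $w$ and equals $\xi w^r$. I would invoke Lemma~\ref{lem:kl} to control the boundary behaviour, in particular that $F$ sends the outer boundary $\{|w|=1\}$ into $\{|\tilde w|=1\}$ and that $w\to0$ is sent to $\tilde w\to0$. A key observation specific to $m=1$ is the \emph{fiber-collapse principle}: if $H(\cdot,a)$ vanished identically on a fiber $\{w=a\}$ with $a\neq0$, then that fiber would be mapped into $\{\tilde w=0\}$, which is disjoint from $\mathbb{F}_{\tilde p,\tilde q}$; hence $H$ never vanishes for $w\neq0$, which rules out interior Blaschke factors in the last component and forces $H(w)=\xi w^r$ once its dependence on $z$ has been removed. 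This last point---eliminating the $z$-dependence of $H$---is exactly where Chen's $m\geq2$ argument, resting on the ellipsoid structure of the $w$-part, is unavailable; I would instead derive it from the Levi-flat product structure of $\mathbb{F}_{p,q}$ near the outer boundary together with the monodromy around $w=0$.

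Once $H=\xi w^r$ is established, I would pass to the covering $\mathbb{E}_p\times\{\re s<0\}\to\mathbb{F}_{p,q}$, $(\zeta,s)\mapsto(\zeta_ie^{(q/p_i)s},e^s)$, under which $F$ descends to a base map $s\mapsto rs+\mathrm{const}$ (corresponding to $w\mapsto\xi w^r$) and a fiberwise proper holomorphic map $f=(f_1,\dots,f_n):\mathbb{E}_p\to\mathbb{E}_{\tilde p}$; equivariance under the deck group yields both $G_j=w^{r\tilde q/\tilde p_j}f_j(\zeta)$ and the integrality conditions $(r\tilde q-q)/\tilde p_j\in\mathbb{Z}$, while Theorem~\ref{thm:ep} supplies $p_\sigma/\tilde p\in\mathbb{N}^n$ and the extra constraints recorded when $1/\tilde p_j\in\mathbb{N}$. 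Part~(\ref{item:n2m1exist}) then follows by combining the ellipsoid existence criterion with the solvability of $(r\tilde q-q)/\tilde p_j\in\mathbb{Z}$, and part~(\ref{item:n2m1aut}) is the specialization $r=1$, $f\in\Aut(\mathbb{E}_p)$, $\xi\in\mathbb{T}$. The main obstacle, as indicated, is removing the $z$-dependence of $H$ without the $m\geq2$ hypothesis.
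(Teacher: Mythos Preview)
Your overall architecture matches the paper's, but you have misplaced the genuine obstacle and left a real gap in the fiberwise step.

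\textbf{On the $z$-independence of $H$.} You write that ``eliminating the $z$-dependence of $H$ is exactly where Chen's $m\geq2$ argument \dots\ is unavailable''. This is not where the $m\geq2$ hypothesis bites. The paper simply notes that, once $F(L_{p,q})\subset L_{\tilde p,\tilde q}$ is known (Lemma~\ref{lem:kl}), the \emph{proof} of Lemma~2.2 in \cite{chen200474} applies verbatim and gives $\partial H/\partial z_j\equiv0$; that argument uses the outer boundary piece $L$, whose description is the same for $m=1$ as for $m\geq2$. The place where Chen genuinely needs $m\geq2$ is in his Lemmas~2.1 and~2.3, i.e.\ in the boundary-invariance statement itself; this is precisely what the paper's Lemma~\ref{lem:kl} supplies (via the Levi-form computation of Lemmas~\ref{lem:flat} and~\ref{lem:rank}). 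So your ``Levi-flat product structure near the outer boundary'' belongs to the proof of Lemma~\ref{lem:kl}, not to the $z$-independence of $H$; the latter follows cheaply afterwards. Your fiber-collapse principle for $H(z,w)=\xi w^r$ is then exactly the paper's argument.

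\textbf{On the fiberwise map.} After $H=\xi w^r$, you pass to a covering $\mathbb{E}_p\times\{\re s<0\}\to\mathbb{F}_{p,q}$ and assert that $F$ ``descends to \dots\ a fiberwise proper holomorphic map $f:\mathbb{E}_p\to\mathbb{E}_{\tilde p}$'' with equivariance giving $G_j=w^{r\tilde q/\tilde p_j}f_j(\zeta)$. The gap is that the induced map on fibers is, a priori, an $s$-dependent family $f^{(s)}:\mathbb{E}_p\to\mathbb{E}_{\tilde p}$ of proper holomorphic maps; deck-group equivariance gives a functional equation relating $f^{(s+2\pi i)}$ to $f^{(s)}$, not constancy in $s$. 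The paper closes this by rescaling $G(\cdot,w)$ to a proper map $\hat f:\mathbb{E}_p\to\mathbb{E}_{\tilde p}$ and then invoking the rigidity result from the proof of Theorem~2 in \cite{andreotti1964249} to conclude $\hat f$ does not depend on $w$. You need either that citation or an explicit argument (e.g.\ discreteness of $\operatorname{Prop}(\mathbb{E}_p,\mathbb{E}_{\tilde p})$ modulo $\operatorname{Aut}(\mathbb{E}_{\tilde p})$ combined with connectedness in $s$) to finish; as written, the step is asserted rather than proved.
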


\begin{rem}(a) Theorem~\ref{thm:n2m1}~(\ref{item:n2m1exist}) gives a positive answer to the question posed by Jarnicki and Pflug (cf.~\cite{jarnicki2008}, Remark~2.5.22\,(a)) in case $n\geq2$.

(b) Theorem~\ref{thm:n2m1}~(\ref{item:n2m1aut}) gives a positive answer to the question posed by Jarnicki and Pflug (cf.~\cite{jarnicki2008}, Remark~2.5.15\,(b)) in case $n\geq2$.

(c) It should be mentioned, that although the structure of the automorphism group $\Aut(\mathbb F_{p,q})$ does not change when passing from $p\in\mathbb N^n$, $q\in\mathbb N$ to $p\in\mathbb R_{>0}^n$, $q>0$, the class of proper holomorphic mappings $\mathbb F_{p,q}\longrightarrow\mathbb F_{\tilde p,\tilde q}$ does. It is a consequence of the fact that the structure of the proper holomorphic mappings $\mathbb E_p\longrightarrow\mathbb E_{\tilde p}$ changes when passing from $p,\tilde p\in\mathbb N^n$ to $p,\tilde p\in\mathbb R_{>0}^n$ (see Section~\ref{sect:ce}).
\end{rem}

\begin{thm}\label{thm:n2m2}Let $n,m\geq2$, $p,\tilde p\in\mathbb R^n_{>0}$, $q,\tilde q\in\mathbb R^m_{>0}$.
\begin{enumerate}[(a)]
\item\label{item:n2m2exist}There exists a proper holomorphic mapping $\mathbb F_{p,q}\longrightarrow\mathbb F_{\tilde p,\tilde q}$ if and only if there exist $\sigma\in\Sigma_n$ and $\tau\in\Sigma_m$ such that
    $$
    \frac{p_{\sigma}}{\tilde p}\in\mathbb N^n\quad\text{and}\quad\frac{q_{\tau}}{\tilde q}\in\mathbb N^m.
    $$
\item\label{item:n2m2form}A mapping $F:\mathbb F_{p,q}\longrightarrow\mathbb F_{\tilde p,\tilde q}$ is proper and holomorphic if and only if
    $$
    F(z,w)=(g(z),h(w)),\quad(z,w)\in\mathbb F_{p,q},
    $$
    where mappings $g:\mathbb E_p\longrightarrow\mathbb E_{\tilde p}$ and $h:\mathbb E_q\longrightarrow\mathbb E_{\tilde q}$ are proper and holomorphic such that $g(0)=0$, $h(0)=0$ (cf.~Theorem~\ref{thm:ep}).

In particular, every proper holomorphic self-mapping in $\mathbb F_{p,q}$ is an automorphism.
\item\label{item:n2m2aut}$F\in\Aut(\mathbb F_{p,q})$ if and only if
    $$
    F(z,w)=(g(z),h(w)),\quad(z,w)\in\mathbb F_{p,q},
    $$
    where $g\in\Aut(\mathbb E_p)$, $h\in\Aut(\mathbb E_q)$ with $g(0)=0$, $h(0)=0$ (cf.~Theorem~\ref{thm:ep}).
\end{enumerate}
\end{thm}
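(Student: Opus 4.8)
The plan is to reduce everything to the theory of proper holomorphic mappings between complex ellipsoids (Theorem~\ref{thm:ep}), the product decoupling $F=(g,h)$ being forced by the boundary behaviour recorded in Lemma~\ref{lem:kl}. Throughout write $\rho(z)=\sum_{j=1}^n|z_j|^{2p_j}$ and $\phi(w)=\sum_{j=1}^m|w_j|^{2q_j}$, so $\mathbb F_{p,q}=\{\rho(z)<\phi(w)<1\}$, with $\tilde\rho,\tilde\phi$ for the target. Sufficiency is the easy direction: given proper $g\colon\mathbb E_p\to\mathbb E_{\tilde p}$, $h\colon\mathbb E_q\to\mathbb E_{\tilde q}$ with $g(0)=h(0)=0$, Theorem~\ref{thm:ep} exhibits them in essentially monomial form, from which one reads off $\tilde\rho\circ g=\rho$ and $\tilde\phi\circ h=\phi$; hence $(g,h)$ carries $\{\rho<\phi<1\}$ into $\{\tilde\rho<\tilde\phi<1\}$ and is visibly proper. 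Combined with the existence criterion of Theorem~\ref{thm:ep}, this gives the ``if'' parts of (a), (b), (c), so the work is the necessity direction of (b).

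Let $F=(G,H)$ be proper holomorphic. First I would show $H$ is independent of $z$. Near a smooth point of the outer boundary $\{\phi(w)=1\}$ the domain is locally a product of a full neighbourhood in $z$ with a one-sided neighbourhood in $w$, and by Lemma~\ref{lem:kl} the map extends there and sends $\{\phi(w)=1\}$ into $\partial\mathbb E_{\tilde q}$. Thus for fixed boundary $w$ the slice $z\mapsto H(z,w)$ is holomorphic from an open subset of $\mathbb C^n$ into $\partial\mathbb E_{\tilde q}$; since $\partial\mathbb E_{\tilde q}$ carries no nonconstant analytic discs (if $\psi$ were one, then $\sum_i|\psi_i|^{2\tilde q_i}\equiv1$ forces each subharmonic summand to be harmonic, whence each $\psi_i$ is constant), the slice is constant. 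Hence each $\partial H_k/\partial z_j$ is holomorphic and vanishes on a real hypersurface in $\partial\mathbb F_{p,q}$, so vanishes identically, and $H=h(w)$. As $m\ge2$, $h$ extends holomorphically across the origin (Hartogs), and Lemma~\ref{lem:kl} together with the maximum principle for the plurisubharmonic function $\tilde\phi\circ h$ shows $h\colon\mathbb E_q\to\mathbb E_{\tilde q}$ is proper; Theorem~\ref{thm:ep} then yields $q_\tau/\tilde q\in\mathbb N^m$ for some $\tau$.

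Next I would treat $z$ fibrewise. For fixed $w$ with $0<\phi(w)<1$ the fibre $\{z:\rho(z)<\phi(w)\}$ is a dilate of $\mathbb E_p$, and by Lemma~\ref{lem:kl} (inner boundary to inner boundary) the slice $z\mapsto G(z,w)$ is proper onto the corresponding dilate of $\mathbb E_{\tilde p}$; normalising both fibres to unit ellipsoids turns it into a proper map $g^{(w)}\colon\mathbb E_p\to\mathbb E_{\tilde p}$ depending on the parameter $w$, so $p_\sigma/\tilde p\in\mathbb N^n$ by Theorem~\ref{thm:ep} and the existence conditions of (a) hold. The crux is to show $g^{(w)}$ fixes the origin and is independent of $w$. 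In the generic (non-ball) situation Theorem~\ref{thm:ep} forces $g^{(w)}$ and $h$ to be monomial up to unitary factors and permutations; unravelling the normalisation then gives
\[
G_k(z,w)=c_k(w)\,z_{\sigma(k)}^{a_k},\qquad a=p_\sigma/\tilde p,\qquad |c_k(w)|^{2\tilde p_k}=\tilde\phi(h(w))/\phi(w),
\]
while the monomial form of $h$ gives $\tilde\phi\circ h\equiv\phi$, hence $|c_k|\equiv1$; a holomorphic function of constant modulus is constant, so $G=g(z)$ with $g(0)=0$, and $\tilde\phi\circ h\equiv\phi$ also forces $h(0)=0$.

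The genuine obstacle is the purely ball case, where the ellipsoid maps supplied by Theorem~\ref{thm:ep} are automorphisms, a priori not fixing the origin. Here one must exclude a M\"obius factor: the fibre normalisation introduces the factor $\phi(w)^{1/2}$, which is not pluriharmonic, inside $g^{(w)}$, and I expect joint holomorphy of $G$ in $(z,w)$ to be incompatible with a nonzero translation parameter $a_w=g^{(w)}(0)$, since the Taylor coefficients in $z$ of $G$ — which must be holomorphic in $w$ — acquire non-holomorphic powers $\phi(w)^{-|\alpha|/2}$ unless $a_w\equiv0$. When only the $q$-side is a ball this is already clean: $c_k(w)$ extends holomorphically across $0$ (as $m\ge2$) and is bounded, whereas $\tilde\phi(h(w))/\phi(w)\to\infty$ if $h(0)\ne0$, a contradiction; in the fully ball case, once $a_w\equiv0$ the map is linear, $G=M(w)z$, and $\log|\det M|=\tfrac n2\log\bigl(\tilde\phi(h(w))/\phi(w)\bigr)$ is pluriharmonic and vanishes on $\partial\mathbb E_q$, whence $\tilde\phi\circ h\equiv\phi$ and $M$ is a constant unitary. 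This last incompatibility argument is the part I expect to be most delicate. Once $G=g(z)$, $H=h(w)$ with $g,h$ proper and origin-fixing, statement (b) follows; specialising to $\tilde p=p$, $\tilde q=q$ and invoking the rigidity of proper self-maps of ellipsoids for $n\ge2$ (Theorem~\ref{thm:ep}) shows proper self-maps of $\mathbb F_{p,q}$ are automorphisms, and identifying the biholomorphic pairs $(g,h)$ yields (c).
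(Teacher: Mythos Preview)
Your overall architecture is right and matches the paper's: use Lemma~\ref{lem:kl} to split the boundary, force $H$ to be independent of $z$, extend $h$ to a proper map $\mathbb E_q\to\mathbb E_{\tilde q}$ via Hartogs, and then analyse $G(\cdot,w)$ fibrewise as a proper map between dilated ellipsoids. Two points deserve comment.

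First, a small one: you work unnecessarily hard for $h(0)=0$. If $h(a)=0$ with $a\neq 0$, then for every $z$ with $\rho(z)<\phi(a)$ one has $\tilde\rho(G(z,a))<\tilde\phi(h(a))=0$, impossible. This one-line argument (which is what the paper uses) works regardless of whether $\mathbb E_{\tilde q}$ is a ball, and should replace your asymptotic $\tilde\phi(h(w))/\phi(w)\to\infty$ discussion.

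Second, and this is the genuine gap: your treatment of the ball case for $G$ is not a proof. You correctly identify that the issue is ruling out a nonzero translation parameter $a_w=g^{(w)}(0)$, but ``I expect joint holomorphy to be incompatible with $a_w\neq 0$ because Taylor coefficients acquire non-holomorphic powers $\phi(w)^{-|\alpha|/2}$'' is a heuristic, not an argument; individual coefficients of $G(\cdot,w)$ can perfectly well be holomorphic in $w$ even though your non-holomorphic normalisation scrambles them. Likewise the pluriharmonic determinant argument you sketch already \emph{assumes} $a_w\equiv 0$.

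The paper closes this gap by a different device that avoids the non-holomorphic normalisation entirely: it restricts to the sub-triangle $\mathbb F^0_{p,q}$ obtained by setting all $w$-coordinates but $w_{\tau(1)}$ equal to zero. This is a Hartogs triangle with $m=1$, to which the already-proven Theorem~\ref{thm:n2m1} applies and gives $G$ explicitly on $\mathbb F^0_{p,q}$ in the form
\[
\hat G_j(z,w)=w_{\tau(1)}^{r\hat q_1/\tilde p_j}\hat f_j\bigl(z_1w_{\tau(1)}^{-q_{\tau(1)}/p_1},\dots,z_nw_{\tau(1)}^{-q_{\tau(1)}/p_n}\bigr),
\]
with $\hat f$ a proper map $\mathbb E_p\to\mathbb E_{\tilde p}$, whose ball part is an automorphism $\hat T\in\Aut(\mathbb B_k)$. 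Now the point: since $m\ge 2$, the coordinate $w_{\tau(1)}$ is allowed to vanish inside $\mathbb F_{p,q}$, and holomorphy of $G$ across $\{w_{\tau(1)}=0\}$ forces the negative powers of $w_{\tau(1)}$ appearing in $\hat T$ to cancel, which happens precisely when $\hat T$ is unitary (i.e., fixes the origin). This is the clean substitute for your ``non-holomorphic powers'' intuition, and it simultaneously pins down $g^{(w)}$ as independent of $w$ with $g(0)=0$.
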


\begin{rem}(a) Theorem~\ref{thm:n2m2}~(\ref{item:n2m2exist}) was proved by Chen and Xu in \cite{chen2001177} (for $n,m\geq2$, $p,\tilde p\in\mathbb N^n$, $q,\tilde q\in\mathbb N^m$) and by Chen in \cite{chen200474} (for $n,m\geq2$, $p,\tilde p\in\mathbb R_{>0}^n$, $q,\tilde q\in\mathbb R_{>0}^m$).

(b) Theorems~\ref{thm:n1m2}~(\ref{item:n2m2form}), (\ref{item:n2m2aut}) were proved by Chen and Xu in \cite{chen2002357} for $n,m\geq2$, $p=\tilde p\in\mathbb N^n$, $q=\tilde q\in\mathbb N^m$.

(c) Theorem~\ref{thm:n2m2}~(\ref{item:n2m2aut}) gives an affirmative answer to the question posed by Jarnicki and Pflug (cf.~\cite{jarnicki2008}, Remark~2.5.17).
\end{rem}

A direct consequence of Theorems~\ref{thm:n1m1}, \ref{thm:n1m2}, \ref{thm:n2m1}, and \ref{thm:n2m2} is the following classification of rigid proper holomorphic self-mappings in generalized Hartogs triangles.

\begin{cor}\label{cor:rigid}Let $n,m\in\mathbb N$, $p\in\mathbb R^n_{>0}$, $q\in\mathbb R^m_{>0}$. Then any proper holomorphic self-mapping in $\mathbb F_{p,q}$ is an automorphism if and only if $n\geq2$ and $m\geq2$.
\end{cor}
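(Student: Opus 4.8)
The plan is to deduce the corollary directly from the four classification theorems, using that the hypotheses $n=m=1$, $n=1\wedge m\ge2$, $n\ge2\wedge m=1$, and $n\ge2\wedge m\ge2$ partition all admissible pairs $(n,m)$ and that each regime is governed by exactly one of Theorems~\ref{thm:n1m1}--\ref{thm:n2m2}. Since we are concerned only with self-mappings, we specialize $\tilde p=p$ and $\tilde q=q$ (so $\tilde n=n$, $\tilde m=m$) throughout, and the phrase ``non-trivial proper holomorphic self-mapping'' is understood to mean a proper holomorphic self-mapping that is not an automorphism.

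For the implication ``$\Leftarrow$'' I would simply invoke Theorem~\ref{thm:n2m2}: when $n\ge2$ and $m\ge2$ its part~(\ref{item:n2m2form}) asserts, in its concluding sentence, that every proper holomorphic self-mapping of $\mathbb F_{p,q}$ is an automorphism, and this holds for all $p\in\mathbb R^n_{>0}$, $q\in\mathbb R^m_{>0}$. This is precisely the sufficiency claimed.

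For ``$\Rightarrow$'' I would argue contrapositively: if $\min(n,m)=1$ then $\mathbb F_{p,q}$ falls under Theorem~\ref{thm:n1m1}, \ref{thm:n1m2}, or \ref{thm:n2m1}, and in each of these the concluding sentence of part~(b) guarantees a non-trivial proper holomorphic self-mapping, so rigidity fails. In the two regimes containing a one-dimensional $z$-factor this is completely transparent: for $n=m=1$ the power map $(z,w)\longmapsto(z^2,w^2)$ is holomorphic and proper on $\mathbb F_{p,q}$---properness follows since $|z|^{2p}<|w|^{2q}<1$ forces $|z|^{4p}<|w|^{4q}<1$ and every point has a full fibre---yet it is two-to-one in the last variable and hence is not an automorphism by Theorem~\ref{thm:n1m1}~(\ref{item:n1m1aut}); likewise, for $n=1$, $m\ge2$ the map $(z,w)\longmapsto(z^2,w)$ is proper but fails to be an automorphism by Theorem~\ref{thm:n1m2}~(\ref{item:n1m2aut}).

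The step deserving genuine care---the remaining case $n\ge2$, $m=1$---is where the necessity really has content. There a non-trivial self-mapping cannot always be produced by an elementary power in a single variable; one must extract it either from a genuine branched power $w\longmapsto w^r$ (available precisely when the arithmetic condition $(r-1)q/p_j\in\mathbb Z$ of Theorem~\ref{thm:n2m1}~(\ref{item:n2m1form}) can be met with $r\ge2$) or from a non-automorphism proper self-mapping of the ellipsoid $\mathbb E_p$ supplied by Theorem~\ref{thm:ep}. Checking that at least one such choice is always available, and that it indeed violates the automorphism normal form of Theorem~\ref{thm:n2m1}~(\ref{item:n2m1aut}), is exactly the substance encoded in the concluding assertion of Theorem~\ref{thm:n2m1}~(\ref{item:n2m1form}); granting it, the contrapositive is complete and the equivalence follows for all $p,q$ simultaneously.
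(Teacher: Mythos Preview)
Your deduction matches the paper's: the corollary is stated there without separate proof, simply as a direct consequence of Theorems~\ref{thm:n1m1}--\ref{thm:n2m2}, and you invoke precisely the concluding sentences of their parts~(b).

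One correction to your commentary on the case $n\ge2$, $m=1$: there is no such thing as ``a non-automorphism proper self-mapping of the ellipsoid $\mathbb E_p$ supplied by Theorem~\ref{thm:ep}''. Theorem~\ref{thm:ep}\,(\ref{item:epform}) asserts exactly the opposite---every proper holomorphic self-map of $\mathbb E_p$ (for $n\ge2$) is an automorphism. Hence the second branch of your dichotomy is empty, and any non-trivial proper self-map of $\mathbb F_{p,q}$ in this regime must come solely from an exponent $r\ge2$ in the $w$-factor, subject to $(r-1)q/p_j\in\mathbb Z$ for all $j$. Since your argument ultimately defers to the theorem's ``In particular'' clause rather than to this dichotomy, the deduction itself survives; but the erroneous alternative should be removed.
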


\begin{rem}Corollary~\ref{cor:rigid} generalizes main result of \cite{chen2002357}, where it is proved that for $n\geq2$, $m\geq2$, $p\in\mathbb N^n$, and $q\in\mathbb N^m$ any proper holomorphic self-mapping in $\mathbb F_{p,q}$ is an automorphism. For more information on rigidity of proper holomorphic mappings between special kind of domains in $\mathbb C^n$, such as Cartan domains, Hua domains, etc., we refer the Reader to \cite{tu20021035}, \cite{tu200213}, \cite{tu2004310}, \cite{tu2014703}, and \cite{tu20151}.
\end{rem}

\section{Complex ellipsoids}\label{sect:ce}

In this section we discuss proper holomorphic mappings between complex ellipsoids. We shall exploit their form in the proofs of main results.

For $p=(p_1,\dots,p_n)\in\mathbb R_{>0}^n$, define the \emph{complex ellipsoid}
\begin{equation*}
\mathbb E_p:=\Big\{(z_1,\dots,z_n)\in\mathbb C^n:\sum_{j=1}^n|z_j|^{2p_j}<1\Big\}.
\end{equation*}

Note that $\mathbb E_{(1,\dots,1)}$ is the unit ball in $\mathbb C^n$. Moreover, if $p/q\in\mathbb N^n$, then $\Psi_{p/q}:\mathbb E_p\longrightarrow\mathbb E_q$ is proper and holomorphic.

The problem of characterization of proper holomorphic mappings between two given complex ellipsoids has been investigated in \cite{landucci1984807} and \cite{dini1991219}. The questions for the existence of such mappings as well as for its form in the case $p,q\in\mathbb N^n$ was completely solved by Landucci in 1984 (cf.~\cite{landucci1984807}). The case $p,q\in\mathbb R_{>0}^n$ was considered seven years later by Dini and Selvaggi Primicerio in \cite{dini1991219}, where the Authors characterized the existence of proper holomorphic mappings $\mathbb E_p\longrightarrow\mathbb E_q$ and found $\Aut(\mathbb E_p)$. They did not give, however, the explicit form of a proper holomorphic mapping between complex ellipsoids. Nevertheless, from the proof of Theorem~1.1 in \cite{dini1991219} we easily derive its form which shall be of great importance during the investigation of proper holomorphic mappings between generalized Hartogs triangles.

\begin{thm}\label{thm:ep}Assume that $n\geq2$, $p,q\in\mathbb R^n_{>0}$.
\begin{enumerate}[(a)]
\item(cf. \cite{landucci1984807}, \cite{dini1991219}). There exists a proper holomorphic mapping $\mathbb E_p\longrightarrow\mathbb E_q$ if and only if there exists $\sigma\in\Sigma_n$ such that
    $$
    \frac{p_{\sigma}}{q}\in\mathbb N^n.
    $$
\item\label{item:epform}A mapping $F:\mathbb E_p\longrightarrow\mathbb E_q$ is proper and holomorphic if and only if
    $$
    F=\Psi_{p_{\sigma}/(qr)}\circ\phi\circ \Psi_r\circ\sigma,
    $$
    where $\sigma\in\Sigma_n$ is such that $p_{\sigma}/q\in\mathbb N^n$, $r\in\mathbb N^n$ is such that $p_{\sigma}/(qr)\in\mathbb N^n$, and $\phi\in\Aut(\mathbb E_{p_{\sigma}/r})$.

    In particular, every proper holomorphic self-mapping in $\mathbb E_p$ is an automorphism.
\item\label{item:epaut}(cf. \cite{landucci1984807}, \cite{dini1991219}). If $0\leq k\leq n$, $p\in\{1\}^k\times(\mathbb R_{>0}\setminus\{1\})^{n-k}$, $z=(z',z_{k+1},\dots,z_n)$, then $F=(F_1,\dots,F_n)\in\Aut(\mathbb E_p)$ if and only if
    $$
    F_j(z)=\begin{cases}H_j(z'),\quad&\textnormal{if }j\leq k\\\zeta_jz_{\sigma(j)}\left(\frac{\sqrt{1-\|a'\|^2}}{1-\langle z',a'\rangle}\right)^{1/p_{\sigma(j)}},\quad&\textnormal{if }j>k\end{cases},
    $$
    where $\zeta_j\in\mathbb T$, $j>k$, $H=(H_1,\dots,H_k)\in\Aut(\mathbb B_k)$, $a'=H^{-1}(0)$, and $\sigma\in\Sigma_n(p)$.
\end{enumerate}
\end{thm}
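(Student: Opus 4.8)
The plan is to treat part~(\ref{item:epform}) as the substantive content---parts~(a) and~(\ref{item:epaut}) being the quoted results of Landucci and of Dini and Selvaggi Primicerio---and to obtain the rigidity assertion as a sheet count at the end. For the sufficiency (``if'') direction in~(\ref{item:epform}) I would simply observe that the displayed composite is a chain of proper holomorphic maps
$$
\mathbb E_p\overset{\sigma}{\longrightarrow}\mathbb E_{p_\sigma}\overset{\Psi_r}{\longrightarrow}\mathbb E_{p_\sigma/r}\overset{\phi}{\longrightarrow}\mathbb E_{p_\sigma/r}\overset{\Psi_{p_\sigma/(qr)}}{\longrightarrow}\mathbb E_q,
$$
in which a permutation of coordinates is a biholomorphism, $\phi$ is an automorphism by hypothesis, and $\Psi_r$, $\Psi_{p_\sigma/(qr)}$ are proper because $r\in\mathbb N^n$ and $p_\sigma/(qr)\in\mathbb N^n$ (recall $\Psi_\alpha:\mathbb E_\beta\to\mathbb E_{\beta/\alpha}$ is proper and holomorphic whenever $\alpha\in\mathbb N^n$). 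A composition of proper holomorphic maps is proper holomorphic, which settles this direction.

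For the necessity (``only if'') direction I would first use~(a) to fix $\sigma\in\Sigma_n$ with $p_\sigma/q\in\mathbb N^n$ and replace $F$ by $F\circ\sigma^{-1}$ and $p$ by $p_\sigma$, so that we may assume $\sigma=\id$ and $p/q\in\mathbb N^n$; it then suffices to produce $r\in\mathbb N^n$ with $p/(qr)\in\mathbb N^n$ and $\phi\in\Aut(\mathbb E_{p/r})$ such that $F=\Psi_{p/(qr)}\circ\phi\circ\Psi_r$. The input here is the analysis inside the proof of Theorem~1.1 of~\cite{dini1991219}: a proper holomorphic map between complex ellipsoids extends continuously to the closures, respects the stratification of the boundary (carrying the strongly pseudoconvex part where all coordinates are nonzero to itself and permuting the coordinate slices), and---both domains being Reinhardt---is monomial up to composition with automorphisms of source and target. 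The remaining task is to rewrite this data in the canonical interleaved form. Here I would use the explicit description in~(\ref{item:epaut}): on the coordinates of weight $\neq1$ an automorphism acts only by a permutation together with unimodular rotations, and these commute with the power maps $\Psi_\alpha$ up to unimodular constants, whereas on the ball coordinates (weight $=1$) an automorphism genuinely mixes variables and can be absorbed only at the level $\mathbb E_{p/r}$ at which those coordinates carry exponent one. I expect this reorganization to be the main obstacle: the branching orders by themselves only recover the total exponent $p/q$, so determining the admissible intermediate vector $r$---equivalently, the precise level at which the variable-mixing part of the automorphism is allowed to sit---requires the interplay between the $\Psi_\alpha$, which do not commute with general automorphisms, and the block structure of the weight-one coordinates furnished by~(\ref{item:epaut}).

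Finally, the ``in particular'' statement follows by counting sheets. A proper holomorphic map between equidimensional bounded domains is a finite branched covering, and for $F$ in the form just established its sheet number is
$$
\deg F=\Big(\prod_{j=1}^n r_j\Big)\Big(\prod_{j=1}^n\frac{p_{\sigma(j)}}{q_j r_j}\Big)=\prod_{j=1}^n\frac{p_{\sigma(j)}}{q_j},
$$
since $\sigma$ and $\phi$ are biholomorphic while $\Psi_r$ and $\Psi_{p_\sigma/(qr)}$ have sheet numbers $\prod_j r_j$ and $\prod_j p_{\sigma(j)}/(q_j r_j)$. When $q=p$ this product equals $\prod_j p_{\sigma(j)}/p_j=1$, because $\sigma$ merely permutes the entries of $p$; hence every proper holomorphic self-mapping is one-sheeted, i.e.\ an automorphism.
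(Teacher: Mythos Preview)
Your treatment of (a), (\ref{item:epaut}), the ``if'' direction of (\ref{item:epform}), and the sheet-count deduction of rigidity is fine and matches the paper's reliance on \cite{dini1991219}. The gap is in the ``only if'' direction of (\ref{item:epform}), and it is twofold.

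First, your summary of what \cite{dini1991219} yields is inaccurate. You write that $F$ is ``monomial up to composition with automorphisms of source and target''. Read literally this is the Landucci form $F=\phi\circ\Psi_{p_\sigma/q}\circ\sigma$ with $\phi\in\Aut(\mathbb E_q)$, and the paper's Remark after the theorem gives an explicit counterexample ($\Psi_{(2,2)}\circ H\circ\Psi_{(2,2)}:\mathbb E_{(2,2)}\to\mathbb E_{(1/2,1/2)}$ with $H\in\Aut(\mathbb B_2)$, $H(0)\neq0$) showing this fails for real exponents. What the proof of \cite{dini1991219} actually produces is a \emph{ball} automorphism $H\in\Aut(\mathbb B_n)$, written with matrix $Q=[h_{j,k}]$ and base point $a\in\mathbb B_n$, together with a component-by-component formula
$$
F_j(z)=\Big(\tfrac{\sqrt{1-\|a\|^2}}{1-\langle z^p,a\rangle}\sum_k h_{j,k}(z_k^{p_k}-a_k)\Big)^{1/q_j},
$$
with the sum reducing to a single monomial term $h_{j,\sigma(j)}z_{\sigma(j)}^{p_{\sigma(j)}}$ (and $a_{\sigma(j)}=0$) whenever $1/q_j\notin\mathbb N$. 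The automorphism sits at the level of $\mathbb B_n$, not of $\mathbb E_p$ or $\mathbb E_q$.

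Second, you correctly locate the difficulty---choosing the intermediate vector $r$---but do not resolve it; you say you ``expect this reorganization to be the main obstacle'' and stop. The paper's argument is precisely the execution of that step: from the explicit formula above one \emph{defines}
$$
r_j:=\begin{cases}p_{\sigma(j)},&\text{if }a_{\sigma(j)}\neq0\text{ or }h_{j,k}\neq0\text{ for some }k\neq\sigma(j),\\ p_{\sigma(j)}/q_j,&\text{otherwise},\end{cases}
$$
i.e.\ $r_j$ records whether the $j$th component genuinely mixes variables or is a pure monomial. One then checks that $r\in\mathbb N^n$, $p_\sigma/(qr)\in\mathbb N^n$, and that the resulting $\phi$ lies in $\Aut(\mathbb E_{p_\sigma/r})$. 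Without this explicit construction the proof of necessity is not complete, and your high-level commutation remarks about (\ref{item:epaut}) do not supply it, because the relevant automorphism lives on the intermediate ellipsoid $\mathbb E_{p_\sigma/r}$, not on the source or the target.
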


\begin{proof}[Proof of Theorem~\ref{thm:ep}]Parts (a) and (c) was proved in \cite{dini1991219}.

(b) Let $F=(F_1,\dots,F_n)\in\Prop(\mathbb E_p,\mathbb E_q)$. Following \cite{stein1972}, any automorphism $H=(H_1,\dots,H_n)\in\Aut(\mathbb B_n)$ is of the form
\begin{equation*}\label{eq:aut}
H_j(z)=\frac{\sqrt{1-\|a\|^2}}{1-\langle z,a\rangle}\sum_{k=1}^nh_{j,k}(z_k-a_k),\quad z=(z_1,\dots,z_n)\in\mathbb B_n,\ j=1,\dots,n,
\end{equation*}
where $a=(a_1,\dots,a_n)\in\mathbb B_n$ and $Q=[h_{j,k}]$ is an $n\times n$ matrix such that
\begin{equation*}
\bar Q(\mathbb I_n-\bar a{}^t\!a){}^t\!Q=\mathbb I_n,
\end{equation*}
where $\mathbb I_n$ is the unit $n\times n$ matrix, whereas $\bar A$ (resp.~${}^t\!A$) is the conjugate (resp.~transpose) of an arbitrary matrix $A$. In particular, $Q$ is unitary if $a=0$.

It follows from \cite{dini1991219} that there exists $\sigma\in\Sigma_n$ such that $p_{\sigma}/q\in\mathbb N^n$, $h_{j,\sigma(j)}\neq0$, and
\begin{equation}\label{eq:ds1.6}
F_j(z)=\left(\frac{\sqrt{1-\|a\|^2}}{1-\langle z^p,a\rangle}h_{j,\sigma(j)}z_{\sigma(j)}^{p_{\sigma(j)}}\right)^{1/q_j}
\end{equation}
whenever $1/q_j\notin\mathbb N$.

If $1/q_j\in\mathbb N$ then $F_j$ either is of the form (\ref{eq:ds1.6}), where $p_{\sigma(j)}/q_j\in\mathbb N$, or
\begin{equation*}
F_j(z)=\left(\frac{\sqrt{1-\|a\|^2}}{1-\langle z^p,a\rangle}\sum_{k=1}^nh_{j,k}(z_k^{p_k}-a_k)\right)^{1/q_j}
\end{equation*}
where $p_k\in\mathbb N$ for any $k$ such that $h_{j,k}\neq0$.

Consequently, if we define $r=(r_1,\dots,r_n)$ as
\begin{equation*}
r_j:=\begin{cases}p_{\sigma(j)},\quad&\textnormal{if }a_{\sigma(j)}\neq0\textnormal{ or there is }k\neq\sigma(j)\textnormal{ with }h_{j,k}\neq0\\
p_{\sigma(j)}/q_j,\quad&\textnormal{otherwise}
\end{cases},
\end{equation*}
then it is easy to see that $r\in\mathbb N^n$, $p_{\sigma}/(qr)\in\mathbb N^n$, and $F$ is as desired.
\end{proof}

\begin{rem}(a) The counterpart of Theorem~\ref{thm:ep}~(\ref{item:epform}) obtained by Landucci in \cite{landucci1984807} for $p,q\in\mathbb N^n$ states that a mapping $F:\mathbb E_p\longrightarrow\mathbb E_q$ is proper and holomorphic if and only if
\begin{equation}\label{eq:epn}
F=\phi\circ \Psi_{p_{\sigma}/q}\circ\sigma,
\end{equation}
where $\sigma\in\Sigma_n$ is such that $p_{\sigma}/q\in\mathbb N^n$ and $\phi\in\Aut(\mathbb E_q)$.

(b) In the general case (\ref{eq:epn}) is no longer true (take, for instance, $\Psi_{(2,2)}\circ H\circ \Psi_{(2,2)}:\mathbb E_{(2,2)}\longrightarrow\mathbb E_{(1/2,1/2)}$, where $H\in\Aut(\mathbb B_2)$, $H(0)\neq0$). In particular, Theorem~\ref{thm:ep}~(\ref{item:epform}) gives a negative answer to the question posed by Jarnicki and Pflug (cf.~\cite{jarnicki2008}, Remark~2.5.20).

(c) Note that in the case $p,q\in\mathbb N^n$ we have $1/q_j\in\mathbb N$ if and only if $q_j=1$. Hence the above definition of $r$ implies that $r=p_{\sigma}/q$ and, consequently, Theorem~\ref{thm:ep}~(\ref{item:epform}) reduces to the Landucci's form (\ref{eq:epn}).

(d) Theorem~\ref{thm:ep}~(\ref{item:epaut}) gives a positive answer to the question posed by Jarnicki and Pflug (cf.~\cite{jarnicki2008}, Remark~2.5.11).
\end{rem}

\section{Boundary behavior of proper holomorphic mappings between Hartogs triangles}

Note that the boundary $\partial\mathbb F_{p,q}$ of the generalized Hartogs triangle $\mathbb F_{p,q}$ may be written as $\partial\mathbb F_{p,q}=\{0,0\}\cup K_{p,q}\cup L_{p,q}$, where
\begin{align*}
K_{p,q}:=&\Big\{(z,w)\in\mathbb C^n\times\mathbb C^m:0<\sum_{j=1}^n|z_j|^{2p_j}=\sum_{j=1}^m|w_j|^{2q_j}<1\Big\},\\
L_{p,q}:=&\Big\{(z,w)\in\mathbb C^n\times\mathbb C^m:\sum_{j=1}^n|z_j|^{2p_j}<\sum_{j=1}^m|w_j|^{2q_j}=1\Big\}.
\end{align*}

Let $\mathbb F_{p,q}$ and $\mathbb F_{\tilde p,\tilde q}$ be two generalized Hartogs triangles and let $F:\mathbb F_{p,q}\longrightarrow\mathbb F_{\tilde p,\tilde q}$ be proper holomorphic mapping. It is known (\cite{landucci1989193}, \cite{chen2001177}) that $F$ extends holomorphically through any boundary point $(z_0,w_0)\in\partial\mathbb F_{p,q}\setminus\{(0,0)\}$.

The aim of this section is to prove the following crucial fact.

\begin{lem}\label{lem:kl}Let $nm\neq1$. If $F:\mathbb F_{p,q}\longrightarrow\mathbb F_{\tilde p,\tilde q}$ is proper and holomorphic, then
\begin{equation*}
F(K_{p,q})\subset K_{\tilde p,\tilde q},\quad F(L_{p,q})\subset L_{\tilde p,\tilde q}.
\end{equation*}
\end{lem}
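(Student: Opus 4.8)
The plan is to show that a proper holomorphic map $F:\mathbb F_{p,q}\longrightarrow\mathbb F_{\tilde p,\tilde q}$ respects the natural stratification of the boundary into the two smooth faces $K$ and $L$. Since $F$ is known to extend holomorphically across every boundary point except $(0,0)$, I may regard $F$ as a holomorphic map on a neighborhood of each point of $K_{p,q}\cup L_{p,q}$. The key observation is that $K_{p,q}$ and $L_{p,q}$ are distinguished \emph{intrinsically} by the geometry of the defining functions. Writing $u(z,w):=\sum|z_j|^{2p_j}-\sum|w_j|^{2q_j}$ and $v(z,w):=\sum|w_j|^{2q_j}-1$, the triangle is $\{u<0,\ v<0\}$; the face $K$ is where $u=0<-v$ and $L$ is where $v=0<-u$. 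A proper map must send $\partial\mathbb F_{p,q}\setminus\{(0,0)\}$ into $\partial\mathbb F_{\tilde p,\tilde q}$ (this follows from properness together with the holomorphic extension, by a standard cluster-set argument), so the real content is that $K$ cannot map into $L$ and vice versa.

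First I would reduce to a local separation of the two faces. The guiding distinction is that along $L$ the outer constraint $\sum|w_j|^{2q_j}=1$ is active, so the $w$-variables are bounded away from $0$ and the map behaves essentially like a map on a product with a bounded ellipsoid slice, whereas along $K$ the defect $1-\sum|w_j|^{2q_j}$ stays positive and the homogeneity $\sum|z_j|^{2p_j}=\sum|w_j|^{2q_j}$ couples the two groups of variables. A clean way to exploit this is to use the scaling structure: $\mathbb F_{p,q}$ is invariant under the one-parameter family of maps coming from the circular/quasi-homogeneous symmetries, and $K_{p,q}$ is exactly the part of the boundary that is swept out under the contraction $(z,w)\mapsto(tz,tw)$-type dilations toward $(0,0)$, while $L_{p,q}$ is fixed by the relevant torus action on the $w$-sphere. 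I would make this precise by examining the cluster set of $F$ at $(0,0)$: points of $K_{p,q}$ can be approached along sequences tending to $(0,0)$, and I would argue that the image of such a sequence must cluster on $K_{\tilde p,\tilde q}\cup\{(0,0)\}$ rather than on $L_{\tilde p,\tilde q}$, because $L_{\tilde p,\tilde q}$ is separated from $(0,0)$ by the condition $\sum|\tilde w_j|^{2\tilde q_j}=1$.

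The main technical step, and where I expect the real obstacle to lie, is controlling the behavior of the $w$-component of $F$ near $(0,0)$. The argument should run: suppose for contradiction that some point of $K_{p,q}$ is carried into $L_{\tilde p,\tilde q}$. Using the holomorphic extension and the fact that $F$ commutes (up to the proper-map constraints) with the dilation $\delta_t(z,w)=(tz,tw)$ modulo the second outer defining function, I would derive that the $w$-image $H(z,w)$ satisfies $\sum|\tilde w_j|^{2\tilde q_j}\to 1$ along a sequence converging to $(0,0)$. On the other hand, boundedness of $F$ together with the maximum principle applied to the plurisubharmonic function $(z,w)\mapsto\sum|H_j(z,w)|^{2\tilde q_j}$ on slices forces this quantity to stay strictly below $1$ in a full neighborhood of the interior accumulation, giving a contradiction. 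The delicate point is that the exponents $\tilde q_j$ need not be integers, so $\sum|H_j|^{2\tilde q_j}$ is only continuous, not smooth or plurisubharmonic in general; I would handle this by composing with the holomorphic monomial maps $\Psi$ (or passing to a branched cover where the exponents become integral) so as to replace $\mathbb E_{\tilde q}$ by a ball-like model on which the usual Hopf-lemma and maximum-principle estimates apply. Once $F(K_{p,q})\subset K_{\tilde p,\tilde q}$ is established, the inclusion $F(L_{p,q})\subset L_{\tilde p,\tilde q}$ follows by the complementary argument, using that $\partial\mathbb F_{p,q}\setminus\{(0,0)\}=K_{p,q}\sqcup L_{p,q}$ and that $F$ maps boundary to boundary, so the face $L$ has nowhere to go but $L$.
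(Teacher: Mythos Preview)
Your outline has two genuine gaps. The final step is a non sequitur: from $F(K_{p,q})\subset K_{\tilde p,\tilde q}$ you cannot deduce $F(L_{p,q})\subset L_{\tilde p,\tilde q}$, since properness gives no injectivity on the boundary and nothing prevents $F$ from sending points of $L_{p,q}$ into $K_{\tilde p,\tilde q}$ as well; the two inclusions require independent arguments. More seriously, the main argument for $F(K_{p,q})\subset K_{\tilde p,\tilde q}$ is only heuristic. The cluster-set/dilation idea controls at best points of $K_{p,q}$ close to the origin, with no mechanism offered to propagate the conclusion across all of $K_{p,q}$, and the maximum-principle step is blocked (as you yourself flag) because $\sum_j|H_j|^{2\tilde q_j}$ is not plurisubharmonic for general real $\tilde q_j$; the suggested branched-cover fix is unavailable when some $\tilde q_j$ is irrational, which is precisely the new case the lemma must cover.

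The paper's proof rests on a completely different idea: Levi geometry. After reducing by citation to the only open case $n\ge 2$, $m=1$, it computes that the Levi form of $K_{\tilde p,\tilde q}$, restricted to the complex tangent space, is nondegenerate at every point with at least two nonzero $z$-coordinates (Lemma~\ref{lem:flat}), while $L_{p,q}$ is foliated by the $n$-dimensional complex leaves $\{w=w_0\}$, $|w_0|=1$. If a point of $L_{p,q}$ were sent into $K_{\tilde p,\tilde q}$, one could choose it off the Jacobian zero set so that $F$ is locally biholomorphic there; then $z\mapsto F(z,w_0)$ is a rank-$n$ holomorphic map into $K_{\tilde p,\tilde q}$, and a short general lemma (Lemma~\ref{lem:rank}) forces $K_{\tilde p,\tilde q}$ to be Levi flat at the image point, a contradiction. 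The inclusion $F(K_{p,q})\subset K_{\tilde p,\tilde q}$ is obtained by applying the same reasoning to the local inverse $(F|_U)^{-1}$, and the possibility of hitting $(0,0)$ is excluded by Chen's argument. This intrinsic Levi-form asymmetry between the two faces is the key idea missing from your approach.
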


\begin{rem}Particular cases of Lemma~\ref{lem:kl} have already been proved by Landucci (cf.~\cite{landucci1989193}, Proposition~3.2, for $p,\tilde p\in\mathbb N^n$, $q,\tilde q\in\mathbb N^m$, $m=1$) and Chen (cf.~\cite{chen200474}, Lemmas~2.1 and 2.3, for $p,\tilde p\in\mathbb R_{>0}^n$, $q,\tilde q\in\mathbb R_{>0}^m$, $m>1$). Therefore it suffices to prove Lemma~\ref{lem:kl} for $n\geq2$ and $m=1$. The main difficulty in carrying out this construction is that in this case both the method from \cite{landucci1989193} (where the assumption $p,\tilde p\in\mathbb N^n$, $q,\tilde q\in\mathbb N$ is essential) as well as the one from \cite{chen200474} (where the assumption $m\geq2$ is essential) breaks down. Invariance of two defined parts of boundary of the generalized Hartogs triangles with respect to the proper holomorphic mappings presents a more delicate problem and shall be solved with help of the notion of Levi flatness of the boundary.
\end{rem}

The following two lemmas will be needed in the proof of Lemma~\ref{lem:kl}.

\begin{lem}\label{lem:flat}If $n\geq2$ and $m=1$, then $K_{p,q}$ is not Levi flat at $(z,w)\in K_{p,q}$, where at lest two coordinates of $z$ are non-zero (i.e.~the Levi form of the defining function restricted to the complex tangent space is not degenerate at $(z,w)$).
\end{lem}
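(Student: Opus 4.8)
The plan is to work with the natural defining function
\[
\rho(z,w):=\sum_{j=1}^n|z_j|^{2p_j}-|w|^{2q},\qquad(z,w)\in\mathbb C^n\times\mathbb C,
\]
so that near the point in question $K_{p,q}=\{\rho=0\}\cap\{0<|w|^{2q}<1\}$. Since $|w|^{2q}=\sum_j|z_j|^{2p_j}>0$ on $K_{p,q}$ we have $w\neq0$, hence $\partial\rho/\partial w=-q|w|^{2(q-1)}\bar w\neq0$; thus $d\rho\neq0$ and $K_{p,q}$ is a genuine real hypersurface near $(z,w)$. Note that each term $|z_j|^{2p_j}=(z_j\bar z_j)^{p_j}$ is real-analytic exactly where $z_j\neq0$, so $\rho$ need not be of class $C^2$ at points with a vanishing $z$-coordinate; I handle this by differentiating only in the non-vanishing directions.

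First I would record the complex Hessian of $\rho$. As $\rho$ is a sum of functions each depending on a single variable, its Levi matrix is diagonal, and a direct computation gives
\[
\frac{\partial^2\rho}{\partial z_j\partial\bar z_j}=p_j^2|z_j|^{2(p_j-1)}\quad(j=1,\dots,n),\qquad\frac{\partial^2\rho}{\partial w\partial\bar w}=-q^2|w|^{2(q-1)},
\]
all off-diagonal entries being $0$, while $\partial\rho/\partial z_j=p_j|z_j|^{2(p_j-1)}\bar z_j$. The complex tangent space $T^{\mathbb C}_{(z,w)}K_{p,q}$ is the kernel of $\partial\rho$, i.e.\ the set of $X=(\xi_1,\dots,\xi_n,\eta)$ with $\sum_{j=1}^np_j|z_j|^{2(p_j-1)}\bar z_j\,\xi_j-q|w|^{2(q-1)}\bar w\,\eta=0$, and on it the Levi form reads $\mathcal L(X)=\sum_{j=1}^np_j^2|z_j|^{2(p_j-1)}|\xi_j|^2-q^2|w|^{2(q-1)}|\eta|^2$.

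The decisive step uses the hypothesis that at least two $z$-coordinates are non-zero; relabelling, assume $z_1,z_2\neq0$. I would seek a tangent vector supported on these two coordinates, setting $\eta=0$ and $\xi_j=0$ for $j\geq3$. Writing $a_j:=p_j|z_j|^{2(p_j-1)}\bar z_j\neq0$ for $j=1,2$, the tangency condition collapses to $a_1\xi_1+a_2\xi_2=0$, solved by $(\xi_1,\xi_2)=(a_2,-a_1)\neq(0,0)$. For this $X$ every derivative entering $\mathcal L(X)$ is evaluated at a non-vanishing coordinate, so no regularity issue arises, and
\[
\mathcal L(X)=p_1^2|z_1|^{2(p_1-1)}|\xi_1|^2+p_2^2|z_2|^{2(p_2-1)}|\xi_2|^2>0,
\]
since both summands are non-negative and the first is strictly positive ($\xi_1=a_2\neq0$). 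Hence the Levi form does not vanish identically on $T^{\mathbb C}_{(z,w)}K_{p,q}$, so $K_{p,q}$ is not Levi flat at $(z,w)$.

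The only genuine subtlety — and the step I would treat most carefully — is the possible failure of smoothness of $\rho$ at coordinates $z_k=0$ with $p_k<1$. To make the Levi form unambiguous I would pass to the complex linear slice $V:=\{(z,w):z_k=0\text{ for every }k\text{ with }z_k=0\text{ at the base point}\}$, on which $\rho$ is real-analytic near $(z,w)$ and $K_{p,q}\cap V$ is a smooth hypersurface. The diagonal, block form of the Hessian guarantees that the Levi form of $K_{p,q}\cap V$ is precisely the restriction of $\mathcal L$ to the $V$-directions, so the vector $X$ constructed above — which lies in $V$ — witnesses non-flatness of $K_{p,q}\cap V$, and therefore of $K_{p,q}$, at $(z,w)$.
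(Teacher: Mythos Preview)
Your proof is correct and follows essentially the same approach as the paper: both use the defining function $\rho=\sum_j|z_j|^{2p_j}-|w|^{2q}$, compute its diagonal Levi form, and exhibit a complex tangent vector on which the form is strictly positive. The only cosmetic difference is that the paper substitutes the tangency relation into $\mathcal L\rho$ and rewrites it as the Lagrange-type sum $\frac{1}{|w|^{2q}}\sum_{j<k}|z_j|^{2(p_j-1)}|z_k|^{2(p_k-1)}|p_jz_kX_j-p_kz_jX_k|^2$, from which positivity is read off globally, whereas you simply produce the single witness $(\xi_1,\xi_2,\eta)=(a_2,-a_1,0)$; your extra care about the $C^2$-regularity of $\rho$ when some $p_k<1$ and $z_k=0$ is a point the paper does not address explicitly.
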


\begin{proof}[Proof of Lemma~\ref{lem:flat}]Let
$$
r(z,w):=\sum_{j=1}^n|z_j|^{2p_j}-|w|^{2q},\quad(z,w)\in\mathbb C^n\times\mathbb C.
$$
Note that $r$ is local defining function for the Hartogs domain $\mathbb F_{p,q}$ (in neighborhood of any boundary point from $K_{p,q}$). It is easily seen that its Levi form equals
\begin{multline*}
\mathcal Lr((z,w);(X,Y))=\sum_{j=1}^np_j^2|z_j|^{2(p_j-1)}|X_j|^2-q^2|w|^{2(q-1)}|Y|^2,\\ (z,w)\in K_{p,q},\ (X,Y)\in\mathbb C^n\times\mathbb C,
\end{multline*}
whereas the complex tangent space at $(z,w)\in K_{p,q}$ is given by
$$
T_{\mathbb C}(z,w)=\Big\{(X,Y)\in\mathbb C^n\times\mathbb C:Y=\frac{1}{q\overline{w}|w|^{2(q-1)}}\sum_{j=1}^np_j\overline{z}_j|z_j|^{2(p_j-1)}X_j\Big\}
$$
(recall that $w\neq0$).

Fix $(z,w)\in K_{p,q}$ such that at lest two coordinates of $z$ are non-zero. To see that the Levi form of $r$ restricted to the complex tangent space is not degenerate at $(z,w)$, it suffices to observe that for any $(X,Y)\in T_{\mathbb C}(z,w)$
$$
\mathcal Lr((z,w);(X,Y))=\frac{1}{|w|^{2q}}\sum_{1\leq j<k\leq n}|z_j|^{2(p_j-1)}|z_k|^{2(p_k-1)}\left|p_jz_kX_j-p_kz_jX_k\right|^2.
$$
\end{proof}

\begin{lem}\label{lem:rank}Let $D\subset\mathbb C^{n+1}$ and $V\subset\mathbb C^n$ be bounded domains, $a\in V$, and let $\Phi:V\longrightarrow\partial D$ be holomorphic mapping such that $\rank\Phi'(a)=n$. Assume that $D$ has local defining function $r$ of class $\mathcal C^2$ in the neighborhood of $\Phi(a)$. Then $\partial D$ is Levi flat at $\Phi(a)$.
\end{lem}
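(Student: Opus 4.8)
The plan is to use the hypothesis that $\Phi$ parametrizes, near $a$, an $n$-dimensional complex submanifold sitting inside the real hypersurface $\partial D$, and to observe that for dimensional reasons its tangent plane at $\Phi(a)$ must fill out the entire complex tangent space $T_{\mathbb C}(\Phi(a))$ of $\partial D$. The vanishing of the Levi form along this submanifold will then force the Levi form to degenerate completely on $T_{\mathbb C}(\Phi(a))$. Writing $\Phi=(\Phi_1,\dots,\Phi_{n+1})$, holomorphic in $\zeta=(\zeta_1,\dots,\zeta_n)$, I would start from $\Phi(V)\subset\partial D=\{r=0\}$, which gives $r(\Phi(\zeta),\overline{\Phi(\zeta)})\equiv0$ on $V$. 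Differentiating with respect to $\zeta_k$ and using that $\partial\overline{\Phi_i}/\partial\zeta_k=0$ by holomorphicity of $\Phi$ yields
\[
\sum_{j=1}^{n+1}\frac{\partial r}{\partial z_j}(\Phi(\zeta))\frac{\partial\Phi_j}{\partial\zeta_k}(\zeta)=0,\quad k=1,\dots,n.
\]
This says that each column $X^{(k)}:=\partial\Phi/\partial\zeta_k$ of $\Phi'$ annihilates $\partial r$, hence lies in $T_{\mathbb C}(\Phi(\zeta))$.

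Next I would differentiate this identity once more, now with respect to $\overline{\zeta_l}$. The term carrying $\partial^2\Phi_j/(\partial\overline{\zeta_l}\,\partial\zeta_k)$ vanishes again by holomorphicity of $\Phi$, and the mixed first-order terms drop out, leaving
\[
\sum_{i,j=1}^{n+1}\frac{\partial^2 r}{\partial z_j\partial\overline{z_i}}(\Phi(\zeta))\frac{\partial\Phi_j}{\partial\zeta_k}(\zeta)\overline{\frac{\partial\Phi_i}{\partial\zeta_l}(\zeta)}=0.
\]
The left-hand side is precisely the polarized Levi form of $r$ evaluated on the columns of $\Phi'$, so at $\zeta=a$ the form $\mathcal L r(\Phi(a);\cdot)$ vanishes on every pair drawn from $X^{(1)}(a),\dots,X^{(n)}(a)$.

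To close the argument I would use a dimension count. Since $r$ is a local defining function, $dr\neq0$ near $\Phi(a)$, and because $r$ is real this forces $\partial r\neq0$; hence $T_{\mathbb C}(\Phi(a))=\ker\partial r$ has complex dimension $n$. The $n$ vectors $X^{(1)}(a),\dots,X^{(n)}(a)$ lie in $T_{\mathbb C}(\Phi(a))$ by the first computation and are linearly independent because $\rank\Phi'(a)=n$; therefore they form a basis of $T_{\mathbb C}(\Phi(a))$. A Hermitian form vanishing on all pairs of basis vectors vanishes identically, so $\mathcal L r(\Phi(a);\cdot)$ is identically zero on $T_{\mathbb C}(\Phi(a))$, which is exactly the assertion that $\partial D$ is Levi flat at $\Phi(a)$.

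I expect the only delicate point to be the bookkeeping in the two successive differentiations, namely invoking holomorphicity of $\Phi$ at the right moments so that the unwanted second-order terms $\partial^2\Phi_j/(\partial\overline{\zeta_l}\,\partial\zeta_k)$ and the mixed first-order contributions genuinely drop out; once the two displayed identities are established, the remainder is elementary linear algebra, and the rank hypothesis enters only through the spanning statement in the final dimension count.
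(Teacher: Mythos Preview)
Your proof is correct and follows the same approach as the paper: differentiate $r\circ\Phi\equiv 0$ twice, read off that the columns of $\Phi'$ lie in $T_{\mathbb C}(\Phi(a))$ and annihilate the Levi form, and then use the rank hypothesis to span $T_{\mathbb C}(\Phi(a))$. Your version is in fact slightly sharper: by differentiating the first identity with respect to $\overline{\zeta_l}$ for an arbitrary index $l$ (rather than only $l=k$ as the paper does), you obtain the full polarized Levi form vanishing on all pairs of basis vectors, which is precisely what the concluding linear-algebra step needs---vanishing of a Hermitian form on the diagonal $\mathcal Lr(\Phi(a);X_m(a))=0$ alone would not suffice.
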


\begin{proof}[Proof of Lemma~\ref{lem:rank}]Equality $r(\Phi(z))=0$, $z=(z_1,\dots,z_n)\in V$, implies
\begin{equation}\label{eq:dif1}
\sum_{j=1}^{n+1}\frac{\partial r}{\partial z_j}(\Phi(z))\frac{\partial\Phi_j}{\partial z_m}(z)=0,\quad z\in V,\ m=1,\dots,n,
\end{equation}
i.e.~
$$
X_m(z):=\left(\frac{\partial\Phi_1}{\partial z_m}(z),\dots,\frac{\partial\Phi_{n+1}}{\partial z_m}(z)\right)\in T_{\mathbb C}(\Phi(z)),\quad z\in V,\ m=1,\dots,n.
$$
Differentiating equality (\ref{eq:dif1}) with respect to $\overline{z}_m$ we get
$$
\sum_{j,k=1}^{n+1}\frac{\partial^2r}{\partial z_j\partial\overline{z}_k}(\Phi(z))\frac{\partial\Phi_j}{\partial z_m}(z)\overline{\frac{\partial\Phi_j}{\partial z_m}(z)}=0,\quad z\in V,\ m=1,\dots,n.
$$
Last equality for $z=a$ gives
\begin{equation}\label{eq:levi}
\mathcal Lr(\Phi(a);X_m(a))=0,\quad m=1,\dots,n.
\end{equation}
On the other hand, $\rank\Phi'(a)=n$ implies that the vectors $X_m(a)$, $m=1,\dots,n$, form the basis of the complex tangent space $T_{\mathbb C}(\Phi(a))$. Consequently, (\ref{eq:levi}) implies that $\mathcal Lr(\Phi(a);X)=0$ for any  $X\in T_{\mathbb C}(\Phi(a))$, i.e.~$\partial D$ is Levi flat at $\Phi(a)$.
\end{proof}

\begin{proof}[Proof of Lemma~\ref{lem:kl}]In view of Lemmas 2.1 and 2.3 from \cite{chen200474} it suffices to consider the case $n\geq2$ and $m=1$.

First we show that $F(L_{p,q})\subset L_{\tilde p,\tilde q}$. Suppose the contrary. Then $F(L_{p,q})\cap K_{\tilde p,\tilde q}\neq\varnothing$ or $(0,0)\in F(L_{p,q})$. First assume $F(L_{p,q})\cap K_{\tilde p,\tilde q}\neq\varnothing$. Since $L_{p,q}\setminus Z(J_F)$ is a dense open set of $L_{p,q}$, the continuity of $F$ implies that there is a point $(z_0,w_0)\in L_{p,q}\setminus Z(J_F)$ such that $F(z_0,w_0)\in K_{\tilde p,\tilde q}$. Without loss of generality we may assume that at least two coordinates of $G(z_0,w_0)$ are non-zero, where $F(z_0,w_0)=(G(z_0,w_0),H(z_0,w_0))\in\mathbb C^n\times\mathbb C$. Consequently, there is an open neighborhood $U\subset\mathbb C^n\times\mathbb C$ of $(z_0,w_0)$ such that $F|_U:U\longrightarrow F(U)$ is biholomorphic and $F(U\cap L_{p,q})=F(U)\cap K_{\tilde p,\tilde q}$. Take a neighborhood $V\subset\mathbb C^n$  of $z_0$ such that $(z,w_0)\in U\cap L_{p,q}$ for $z\in V$. Then
$$
V\ni z\overset{\Phi}\longmapsto F(z,w_0)\in F(U)\cap K_{\tilde p,\tilde q}
$$
is holomorphic mapping with $\rank\Phi'(z_0)=n$. By Lemma~\ref{lem:rank}, $K_{p,q}$ is Levi flat at $F(z_0,w_0)$, which contradicts Lemma~\ref{lem:flat}. The assumption $(0,0)\in F(L_{p,q})$ also leads to a contradiction. Indeed, one may repeat the reasoning from the proof of Lemma~2.1 from \cite{chen200474}.

Now we shall prove that $F(K_{p,q})\subset K_{\tilde p,\tilde q}$. Suppose the contrary. Then $F(K_{p,q})\cap L_{\tilde p,\tilde q}\neq\varnothing$ or $(0,0)\in F(K_{p,q})$. First assume $F(K_{p,q})\cap L_{\tilde p,\tilde q}\neq\varnothing$. Since $K_{p,q}\setminus Z(J_F)$ is a dense open set of $K_{p,q}$, the continuity of $F$ implies that there is a point $(z_0,w_0)\in K_{p,q}\setminus Z(J_F)$ such that $F(z_0,w_0)\in L_{\tilde p,\tilde q}$. Without loss of generality we may assume that at least two coordinates of $z_0$ are non-zero. Consequently, there is an open neighborhood $U\subset\mathbb C^n\times\mathbb C$ of $(z_0,w_0)$ such that $F|_U:U\longrightarrow F(U)$ is biholomorphic and $F(U\cap K_{p,q})=F(U)\cap L_{\tilde p,\tilde q}$. It remains to apply the previous reasoning to the inverse mapping $(F|_U)^{-1}:F(U)\longrightarrow U$. The assumption $(0,0)\in F(K_{p,q})$ also leads to a contradiction. Again, one may repeat the reasoning from the proof of Lemma~2.1 from \cite{chen200474} and therefore we skip it.
\end{proof}

\section{Proofs of the Theorems~\ref{thm:n1m1}, \ref{thm:n1m2}, \ref{thm:n2m1}, and \ref{thm:n2m2}}

In the proof of Theorem~\ref{thm:n1m1} we shall use part of the main result from \cite{isaev200633}, where complete characterization of not elementary proper holomorphic mappings between bounded Reinhardt domains in $\mathbb C^2$ is given (cf.~\cite{kosinski2009711} for unbounded case).

\begin{proof}[Proof of Theorem~\ref{thm:n1m1}]Observe, that (\ref{item:n1m1exist}) and (\ref{item:n1m1aut}) follows immediately from (\ref{item:n1m1form}).

If $F=(G,H)$ is of the form given in (\ref{item:n1m1form}), then it is holomorphic and
$$
|G(z,w)|^{\tilde p}|H(z,w)|^{-\tilde q}=\begin{cases}\left(|z||w|^{-q/p}\right)^{k\tilde p},\quad&\textnormal{if }q/p\notin\mathbb Q\\\left(|z||w|^{-q/p}\right)^{k'\tilde p}\left|B(z^{p'}w^{-q'})\right|^{\tilde p},\quad&\textnormal{if }q/p\in\mathbb Q\end{cases},
$$
i.e.~$F$ is proper.

On the other hand, let $F:\mathbb F_{p,q}\longrightarrow\mathbb F_{\tilde p,\tilde q}$ be arbitrary mapping which is proper and holomorphic.

Assume first that $F$ is elementary algebraic mapping, i.e. it is of the form
$$
F(z,w)=\left(\alpha z^aw^b,\beta z^cw^d\right),
$$
where $a,b,c,d\in\mathbb Z$ are such that $ad-bc\neq0$ and $\alpha,\beta\in\mathbb C$ are some constants. Since $F$ is surjective, we infer that $c=0$, $d\in\mathbb N$, and $\xi:=\beta\in\mathbb T$. Moreover,
\begin{equation}\label{eq:elem1}
|\alpha|^{\tilde p}|z|^{a\tilde p}|w|^{b\tilde p-d\tilde q}<1,
\end{equation}
whence $a\in\mathbb N$, $b\tilde p-d\tilde q\in\mathbb N$, and $\zeta:=\alpha\in\mathbb T$. Let $k:=a$, $l:=d$. One may rewrite (\ref{eq:elem1}) as
$$
\left(|z|^p|w|^{-q}\right)^{k\tilde p/p}|w|^{b\tilde p-l\tilde q+kq\tilde p/p}<1.
$$
Since one may take sequence $(z_{\nu},1/2)_{\nu\in\mathbb N}\subset\mathbb F_{p,q}$ with $|z_{\nu}|^p2^q\to1$ as $\nu\to\infty$, we infer that $b\tilde p-l\tilde q+kq\tilde p/p=0$, i.e.
$$
b=\frac{l\tilde q}{\tilde p}-\frac{kq}{p}.
$$
Consequently, $F$ is as in the Theorem~\ref{thm:n1m1}~(\ref{item:n1m1form}).

Assume now that $F$ is not elementary. Then it follows from the Theorem 0.1 in \cite{isaev200633} that $F$ is of the form
$$
F(z,w)=\left(\alpha z^aw^b\tilde B\left(z^{p'}w^{-q'}\right),\beta w^l\right),
$$
where $a,b\in\mathbb Z$, $a\geq0$, $p',q',l\in\mathbb N$, $p',q'$ are relatively prime,
\begin{equation}\label{eq:isakru}
\frac{q'}{p'}=\frac{q}{p},\quad\frac{\tilde q}{\tilde p}=\frac{aq'+bp'}{lp'},
\end{equation}
$\alpha,\beta\in\mathbb C$ are some constants, and $\tilde B$ is a non-constant finite Blaschke product non-vanishing at the origin.

From the surjectivity of $F$ we immediately infer that $\zeta:=\alpha\in\mathbb T$ and $\xi:=\beta\in\mathbb T$. If we put $k':=a$, then (\ref{eq:isakru}) implies
$$
b=\frac{l\tilde q}{\tilde p}-\frac{k'q}{p},
$$
which ends the proof.
\end{proof}

\begin{proof}[Proof of Theorem~\ref{thm:n1m2}]We shall write $w=(w_1,\dots,w_m)\in\mathbb C^m$. Without loss of generality we may assume that there is $0\leq\mu\leq m$ with $\tilde q\in\{1\}^{\mu}\times(\mathbb R_{>0}\setminus\{1\})^{m-\mu}$. Let
$$
F=(G,H):\mathbb F_{p,q}\longrightarrow\mathbb F_{\tilde p,\tilde q}\subset\mathbb C\times\mathbb C^m
$$
be proper holomorphic mapping. It follows from Lemma~\ref{lem:kl} that $F(L_{p,q})\subset L_{\tilde p,\tilde q}$. Moreover, by Lemma 2.2 from \cite{chen200474} (note that the proof remains valid for $n=1$), $H$ is independent of the variable $z$. Hence $h:=H(0,\cdot):(\mathbb E_q)_*\longrightarrow(\mathbb E_{\tilde q})_*$ is proper and holomorphic. Consequently, by Hartogs theorem, it extends to proper holomorphic mapping $h:\mathbb E_q\longrightarrow\mathbb E_{\tilde q}$, i.e.~(cf.~Theorem~\ref{thm:ep}~(\ref{item:epform}))
$$
h=\Psi_{q_{\sigma}/(\tilde qr)}\circ\psi\circ \Psi_r\circ\sigma
$$
for some $\sigma\in\Sigma_m$ with $q_{\sigma}/\tilde q\in\mathbb N^m$, $r\in\mathbb N^m$ with $q_{\sigma}/(\tilde qr)\in\mathbb N^m$, and $\psi\in\Aut(\mathbb E_{q_{\sigma}/r})$ with $\psi(0)=0$. Indeed, if $a=(a_1,\dots,a_m)$ is a zero of $h$ we immediately get
$$
G(z,a)=0,\quad|z|^{2p}<\sum_{j=1}^m|a_j|^{2q_j},
$$
which is clearly a contradiction, unless $a=0$. Consequently, $h(0)=0$.

Without loss of generality we may assume that there is $\mu\leq l\leq m$ with $1/\tilde q_j\notin\mathbb N$ if and only if $j>l$. It follows from the proof of Theorem~\ref{thm:ep}~(\ref{item:epform}) that
$$
\frac{q_{\sigma(j)}}{r_j}=\begin{cases}1,\quad&\textnormal{if }j=1,\dots,l\\\tilde q_j,\quad&\textnormal{if }j=l+1,\dots,m\end{cases},
$$
whence
$$
\psi(w)=(U(w_1,\dots,w_l),\xi_{l+1}w_{l+\tau(1)},\dots,\xi_mz_{l+\tau(m-l)}),
$$
where $U=(U_1,\dots,U_l)\in\mathbb U(l)$ and $\tau\in\Sigma_{m-l}(\tilde q_{l+1},\dots,\tilde q_m)$. Finally,
\begin{multline*}
h(w)=\left(U_1^{1/\tilde q_1}\left(w_{\sigma(1)}^{q_{\sigma(1)}},\dots,w_{\sigma(l)}^{q_{\sigma(l)}}\right),\dots, U_l^{1/\tilde q_l}\left(w_{\sigma(1)}^{q_{\sigma(1)}},\dots,w_{\sigma(l)}^{q_{\sigma(l)}}\right),\right.\\ \left.\xi_{l+1}w_{\sigma(l+1)}^{q_{\sigma(l+1)}/\tilde q_{l+1}},\dots,\xi_mw_{\sigma(m)}^{q_{\sigma(m)}/\tilde q_m}\right).
\end{multline*}
In particular, if we write $h=(h_1,\dots,h_m)$,
\begin{equation}\label{eq:unitarity}
\sum_{j=1}^m|h_j(w)|^{2\tilde q_j}=\sum_{j=1}^m|w_j|^{2q_j},\quad w\in\mathbb E_q.
\end{equation}

For $w\in\mathbb C^m$, $0<\rho_w:=\sum_{j=1}^m|w_j|^{2q_j}<1$ let
$$
g(z):=G(z,w),\quad z\in\rho_{w}^{1/(2p)}\mathbb D.
$$
$g$ may depend, a priori, on $w$. Since $F(K_{p,q})\subset K_{\tilde p,\tilde q}$ (cf.~Lemma~\ref{lem:kl}), it follows from (\ref{eq:unitarity}) that $g:\rho_w^{1/(2p)}\mathbb D\longrightarrow\rho_w^{1/(2\tilde p)}\mathbb D$ is proper and holomorphic, i.e.
\begin{equation}\label{eq:f1}
g(z)=\rho_w^{1/(2\tilde p)}B\left(z\rho_w^{-1/(2p)}\right),\quad z\in\rho_w^{1/(2p)}\mathbb D,
\end{equation}
where $B$ is a finite Blaschke product. Let
\begin{align*}
\mathbb F^0_{p,q}&:=\mathbb F_{p,q}\cap\left(\mathbb C\times\{0\}^{\sigma(1)-1}\times\mathbb C\times\{0\}^{m-\sigma(1)}\right),\\
\mathbb F^0_{\tilde p,q_{\sigma}/r}&:=\mathbb F_{\tilde p,q_{\sigma}/r}\cap\left(\mathbb C^2\times\{0\}^{m-1}\right).
\end{align*}

Let $\Phi\in\Aut(\mathbb F_{\tilde p,q_{\sigma}/r})$ be defined by
$$
\Phi(z,w):=\left(z,U^{-1}(w_1,\dots,w_l),w_{l+1},\dots,w_m\right)
$$
and let
$$
\hat\xi_1:=\begin{cases}\xi_1,\quad&\textnormal{if }l=0\\1,\quad&\textnormal{if }l>0\end{cases},\qquad\hat q_1:=\begin{cases}\tilde q_1,\quad&\textnormal{if }l=0\\1,\quad&\textnormal{if }l>0\end{cases}.
$$
Then $\Phi\circ(G,\psi\circ \Psi_r\circ\sigma):\mathbb F^0_{p,q}\longrightarrow\mathbb F^0_{\tilde p,q_{\sigma}/r}$ is proper and holomorphic with
\begin{equation}\label{eq:phi1}
(\Phi\circ(G,\psi\circ\Psi_r\circ\sigma))(z,w)=\left(G(z,w),\hat\xi_1w_{\sigma(1)}^{q_{\sigma(1)}/\hat q_1},0,\dots,0\right),\quad (z,w)\in\mathbb F^0_{p,q}.
\end{equation}

It follows from Theorem~\ref{thm:n1m1} that
\begin{equation}\label{eq:phithmfp2}
(\Phi\circ(G,\psi\circ\Psi_r\circ\sigma))(z,w)=\left(\hat G(z,w),\eta w_{\sigma(1)}^s,0,\dots,0\right),\quad (z,w)\in\mathbb F^0_{p,q},
\end{equation}
where
$$
\hat G(z,w):=\begin{cases}\zeta z^kw_{\sigma(1)}^{s\hat q_1/\tilde p-kq_{\sigma(1)}/p},\hfill&\textnormal{if }q_{\sigma(1)}/p\notin\mathbb Q\\\zeta z^{k'}w_{\sigma(1)}^{s\hat q_1/\tilde p-k'q_{\sigma(1)}/p}\hat B\left(z^{p'}w_{\sigma(1)}^{-q'_{\sigma(1)}}\right),\hfill&\textnormal{if }q_{\sigma(1)}/p\in\mathbb Q\end{cases},
$$
$\zeta,\eta\in\mathbb T$, $k,s,p',q'_{\sigma(1)}\in\mathbb N$, $k'\in\mathbb N\cup\{0\}$ are such that $p',q'_{\sigma(1)}$ are relatively prime, $q_{\sigma(1)}/p=q'_{\sigma(1)}/p'$, $s\hat q_1/\tilde p-kq_{\sigma(1)}/p\in\mathbb Z$, $q_{\sigma(1)}/p=q'_{\sigma(1)}/p'$, $s\hat q_1/\tilde p-kq_{\sigma(1)}/p\in\mathbb Z$, and $\hat B$ is a finite Blaschke product non-vanishing at 0 (if $\hat B\equiv1$ then $k'>0$). Hence
\begin{multline}\label{eq:h1}
(\Phi\circ(G,\psi\circ\Psi_r\circ\sigma))(z,w)=\\
\left(\hat G(z,w)+\alpha(z,w),w_{\sigma(1)}^{q_{\sigma(1)}},\dots,w_{\sigma(l)}^{q_{\sigma(l)}},\xi_{l+1}w_{\sigma(l+1)}^{q_{\sigma(l+1)}/\tilde q_{l+1}},\dots,\xi_mw_{\sigma(m)}^{q_{\sigma(m)}/\tilde q_m}\right),
\end{multline}
for $(z,w)\in\mathbb F_{p,q}$, $w_{\sigma(1)}\neq0$, where $\alpha$ is holomorphic on $\mathbb F_{p,q}$ with $\alpha|_{\mathbb F^0_{p,q}}=0$. Comparing (\ref{eq:phi1}) and (\ref{eq:phithmfp2}) we conclude that
$$
\eta=\hat\xi_1,\quad s=q_{\sigma(1)}/\hat q_1.
$$
Since the mapping on the left side of (\ref{eq:h1}) is holomorphic on $\mathbb F_{p,q}$, the function
\begin{equation}\label{eq:hatG}
\hat G(z,w)=\begin{cases}\zeta z^kw_{\sigma(1)}^{q_{\sigma(1)}(1/\tilde p-k/p)},\hfill&\textnormal{if }q_{\sigma(1)}/p\notin\mathbb Q\\\zeta z^{k'} w_{\sigma(1)}^{q_{\sigma(1)}(1/\tilde p-k'/p)}\hat B\left(z^{p'}w_{\sigma(1)}^{-q'_{\sigma(1)}}\right),\hfill&\textnormal{if }q_{\sigma(1)}/p\in\mathbb Q\end{cases}
\end{equation}
with $q_{\sigma(1)}(1/\tilde p-k/p)\in\mathbb Z$ and $q_{\sigma(1)}(1/\tilde p-k'/p)\in\mathbb Z$ has to be holomorphic on $\mathbb F_{p,q}$, too. Since $m\geq2$, it may happen $w_{\sigma(1)}=0$. Consequently, $q_{\sigma(1)}(1/\tilde p-k/p)\in\mathbb N\cup\{0\}$ in the first case of (\ref{eq:hatG}), whereas $\hat B(t)=t^{k''}$ for some $k''\in\mathbb N$ with $q_{\sigma(1)}(1/\tilde p-k'/p)-k''q'_{\sigma(1)}\in\mathbb N\cup\{0\}$ in the second case. Thus
$$
\hat G(z,w)=\zeta z^kw_{\sigma(1)}^{q_{\sigma(1)}(1/\tilde p-k/p)},
$$
where $k\in\mathbb N$, $q_{\sigma(1)}(1/\tilde p-k/p)\in\mathbb N\cup\{0\}$ (in the second case of (\ref{eq:hatG}) it suffices to take $k:=k'+p'k''$).

Observe that $\hat G+\alpha=G$. Fix $w\in\{0\}^{\sigma(1)-1}\times\mathbb C\times\{0\}^{m-\sigma(1)}$ with $0<\rho_w<1$. Then $\rho_w=|w_{\sigma(1)}|^{2q_{\sigma(1)}}$ and $\hat G(\cdot,w)=g$ on $\rho_w^{1/(2p)}\mathbb D$, i.e.
$$
\zeta z^kw_{\sigma(1)}^{q_{\sigma(1)}(1/\tilde p-k/p)}=|w_{\sigma(1)}|^{q_{\sigma(1)}/\tilde p}B\left(z|w_{\sigma(1)}|^{-q_{\sigma(1)}/p}\right),\quad z\in|w_{\sigma(1)}|^{q_{\sigma(1)}/p}\mathbb D.
$$
Hence $B(t)=\zeta t^k$ and $q_{\sigma(1)}(1/\tilde p-k/p)=0$, i.e.~$k=p/\tilde p$. Hence part (\ref{item:n1m2exist}) is proved. To finish part (\ref{item:n1m2form}), note that $g(z)=\zeta z^{p/\tilde p}$. Consequently, $g$ does not depend on $w$ and
$$
G(z,w)=\zeta z^{p/\tilde p},\quad (z,w)\in\mathbb F_{p,q}.
$$

Part (\ref{item:n1m2aut}) follows directly from (\ref{item:n1m2form}).
\end{proof}

\begin{proof}[Proof of Theorem~\ref{thm:n2m1}]Firstly, if $p,q,\tilde p$, and $\tilde q$ satisfy the condition in (\ref{item:n2m1exist}), then the mapping
$$
\mathbb F_{p,q}\ni(z_1,\dots,z_n,w)\longmapsto\left(z_{\sigma(1)}^{p_{\sigma(1)}/\tilde p_1}w^{(r\tilde q-q)/\tilde p_1},\dots,z_{\sigma(n)}^{p_{\sigma(n)}/\tilde p_n}w^{(r\tilde q-q)/\tilde p_n},w^r\right)\in\mathbb F_{\tilde p,\tilde q}
$$
is proper and holomorphic.

Secondly, if the mapping $F$ is defined by the formulas given in (\ref{item:n2m1form}), then, using Theorem~\ref{thm:ep}~(\ref{item:epform}), it is easy to see that $F:\mathbb F_{p,q}\longrightarrow\mathbb F_{\tilde p,\tilde q}$ is proper and holomorphic.

Finally, (\ref{item:n2m1aut}) is a direct consequence of (\ref{item:n2m1form}) and Theorem~\ref{thm:ep}~(\ref{item:epaut}).

Thus it remains to prove that if $F:\mathbb F_{p,q}\longrightarrow\mathbb F_{\tilde p,\tilde q}$ is proper and holomorphic, then $p,q,\tilde p$, and $\tilde q$ satisfy the condition in (\ref{item:n2m1exist}) and $F$ is given by formulas in (\ref{item:n2m1form}).

Let
$$
F=(G,H)=(G_1,\dots,G_n,H):\mathbb F_{p,q}\longrightarrow\mathbb F_{\tilde p,\tilde q}
$$
be proper holomorphic mapping. Since $F(L_{p,q})\subset L_{\tilde p,\tilde q}$ (cf.~Lemma~\ref{lem:kl}), it follows from the proof of Lemma~2.2 in \cite{chen200474} that $H$ does not depend on the variable $z$. Hence $h:=H(0,\cdot)$ is proper and holomorphic self-mapping in $\mathbb D_*$. Consequently, by Hartogs theorem, it extends to proper holomorphic mapping $h:\mathbb D\longrightarrow\mathbb D$, i.e.~$h$ is a finite Blaschke product. On the other hand, if $h(a)=0$ we immediately get
$$
G(z,a)=0,\quad\sum_{j=1}^n|z_j|^{2p_j}<|a|^{2q},
$$
which is clearly a contradiction, unless $a=0$. Hence
\begin{equation}\label{eq:H}
H(z,w)=\xi w^r
\end{equation}
for some $\xi\in\mathbb T$ and $r\in\mathbb N$.

For $w$, $0<|w|<1$, let
$$
\mathbb E_{p,q}(w):=\Big\{(z_1,\dots,z_n)\in\mathbb C^n:\sum_{j=1}^n|z_j|^{2p_j}<|w|^{2q}\Big\}.
$$
Since $F(K_{p,q})\subset K_{\tilde p,\tilde q}$ (cf.~Lemma~\ref{lem:kl}), it follows from (\ref{eq:H}) that $G(\cdot,w):\mathbb E_{p,q}(w)\longrightarrow\mathbb E_{\tilde p,r\tilde q}(w)$ is proper and holomorphic. Hence, if we put
$$
\hat f_j(z_1,\dots,z_n):=\xi w^{-r\tilde q/\tilde p_j}G_j\left(z_1w^{q/p_1},\dots,z_nw^{-q/p_n},w\right),\quad j=1,\dots,n,
$$
we conclude that $\hat f=(\hat f_1,\dots,\hat f_n):\mathbb E_p\longrightarrow\mathbb E_{\tilde p}$ is proper and holomorphic. It follows from the proof of Theorem 2 in \cite{andreotti1964249} that $\hat f$ does not depend on $w$. Consequently, for $f=(f_1,\dots,f_n)$, where $f_j:=\xi^{-1}\hat f_j$, $j=1,\dots,n$, we obtain
$$
G_j(z_1,\dots,z_n,w)=w^{r\tilde q/\tilde p_j}f_j\left(z_1w^{-q/p_1},\dots,z_nw^{-q/p_n}\right),\quad j=1,\dots,n.
$$
To complete the proof it remains to apply the explicit form of an $f$ (cf.~Theorem~\ref{thm:ep}~(\ref{item:epform})).
\end{proof}

We are left with the proof of Theorem~\ref{thm:n2m2}. Although its proof proceeds parallel to the one of Theorem~\ref{thm:n1m2}, we decided---for the convenience of the Reader---to present it due to some technical details that make both proofs different.

\begin{proof}[Proof of Theorem~\ref{thm:n2m2}]We will write $z=(z_1,\dots,z_n)\in\mathbb C^n$ and $w=(w_1,\dots,w_m)\in\mathbb C^m$. Without loss of generality we may assume that there is $0\leq\nu\leq n$ with $\tilde p\in\{1\}^{\nu}\times(\mathbb R_{>0}\setminus\{1\})^{n-\nu}$ and $0\leq\mu\leq m$ with $\tilde q\in\{1\}^{\mu}\times(\mathbb R_{>0}\setminus\{1\})^{m-\mu}$. Let
$$
F=(G,H):\mathbb F_{p,q}\longrightarrow\mathbb F_{\tilde p,\tilde q}\subset\mathbb C^n\times\mathbb C^m
$$
be proper holomorphic mapping. It follows from Lemma~\ref{lem:kl} that $F(L_{p,q})\subset L_{\tilde p,\tilde q}$ and hence, using Lemma~2.2 from \cite{chen200474}, $H$ is independent of the variable $z$. Hence the mapping $h:=H(0,\cdot):(\mathbb E_q)_*\longrightarrow(\mathbb E_{\tilde q})_*$ is proper and holomorphic. Consequently, by Hartogs theorem, it extends to proper and holomorphic mapping $h:\mathbb E_q\longrightarrow\mathbb E_{\tilde q}$, i.e.~(cf.~Theorem~\ref{thm:ep}~(\ref{item:epform}))
$$
h=\Psi_{q_{\tau}/(\tilde qt)}\circ\psi\circ\Psi_t\circ\tau
$$
for some $\tau\in\Sigma_m$ with $q_{\tau}/\tilde q\in\mathbb N^m$, $t\in\mathbb N^m$ with $q_{\tau}/(\tilde qt)\in\mathbb N^m$, and $\psi\in\Aut(\mathbb E_{q_{\tau}/t})$ with $\psi(0)=0$. Indeed, if $a=(a_1,\dots,a_m)$ is a zero of $h$, we immediately get
$$
G(z,a)=0,\quad\sum_{j=1}^n|z_j|^{2p_j}<\sum_{j=1}^m|a_j|^{2q_j},
$$
which is clearly a contradiction, unless $a=0$. Consequently, $h(0)=0$.

Without loss of generality we may assume that there is $\mu\leq l\leq m$ with $1/\tilde q_j\notin\mathbb N$ if and only if $j=l+1,\dots,m$. It follows from the proof of Theorem~\ref{thm:ep}~(\ref{item:epform}) that
$$
\frac{q_{\tau(j)}}{t_j}=\begin{cases}1,\quad&\textnormal{if }j=1,\dots,l\\\tilde q_j,\quad&\textnormal{if }j=l+1,\dots,m\end{cases},
$$
whence
$$
\psi(w)=(U(w_1,\dots,w_l),\xi_{l+1}w_{l+\omega(1)},\dots,\xi_mw_{l+\omega(m-l)}),
$$
where $U=(U_1,\dots,U_l)\in\mathbb U(l)$, $\xi_j\in\mathbb T$, $j>l$, and $\omega\in\Sigma_{m-l}(\tilde q_{l+1},\dots,\tilde q_m)$. Finally,
\begin{multline*}
h(w)=\left(U_1^{1/\tilde q_1}\left(w_{\tau(1)}^{q_{\tau(1)}},\dots,w_{\tau(l)}^{q_{\tau(l)}}\right),\dots, U_l^{1/\tilde q_l}\left(w_{\tau(1)}^{q_{\tau(1)}},\dots,w_{\tau(l)}^{q_{\tau(l)}}\right),\right.\\
\left.\xi_{l+1}w_{\tau(l+1)}^{q_{\tau(l+1)}/\tilde q_{l+1}},\dots,\xi_mw_{\tau(m)}^{q_{\tau(m)}/\tilde q_m}\right).
\end{multline*}
In particular, if we write $h=(h_1,\dots,h_m)$, then
\begin{equation}\label{eq:unitarity}
\sum_{j=1}^m|h_j(w)|^{2\tilde q_j}=\sum_{j=1}^m|w_j|^{2q_j},\quad w=(w_1,\dots,w_m)\in\mathbb E_q.
\end{equation}

For $w\in\mathbb C^m$, $0<\rho_w:=\sum_{j=1}^m|w_j|^{2q_j}<1$ let
$$
\mathbb E_{p,q}(w):=\Big\{z\in\mathbb C^n:\sum_{j=1}^n|z_j|^{2p_j}<\sum_{j=1}^m|w_j|^{2q_j}\Big\}.
$$
Since $F(K_{p,q})\subset K_{\tilde p,\tilde q}$ (cf.~Lemma~\ref{lem:kl}), it follows from (\ref{eq:unitarity}) that $g:=G(\cdot,w):\mathbb E_{p,q}(w)\longrightarrow\mathbb E_{\tilde p,q}(w)$ is proper and holomorphic. Note that $g$ may depend, a priori, on $w$.

Let
\begin{equation}\label{eq:fg}
f_j(z):=\rho_w^{-1/(2\tilde p_j)}g_j\left(z_1\rho_w^{1/(2p_1)},\dots,z_n\rho_w^{1/(2p_n)}\right),\quad j=1,\dots,n.
\end{equation}
Then $f:=(f_1,\dots,f_n):\mathbb E_p\longrightarrow\mathbb E_{\tilde p}$ is proper nad holomorphic, i.e.
\begin{equation}\label{eq:f}
f=\Psi_{p_{\sigma}/(\tilde ps)}\circ\varphi\circ\Psi_s\circ\sigma
\end{equation}
for some $\sigma\in\Sigma_n$ with $p_{\sigma}/\tilde p\in\mathbb N^n$, $s\in\mathbb N^n$ with $p_{\sigma}/(\tilde ps)\in\mathbb N^n$, and $\varphi\in\Aut(\mathbb E_{p_{\sigma}/s})$. Without loss of generality we may assume that there is $\nu\leq k\leq n$ such that $1/\tilde p_j\notin\mathbb N$ if and only if $j=k+1,\dots,n$. It follows from the proof of Theorem~\ref{thm:ep}~(\ref{item:epform}) that
$$
\frac{p_{\sigma(j)}}{s_j}=\begin{cases}1,\quad&\textnormal{if }j=1,\dots,k\\\tilde p_j,\quad&\textnormal{if }j=k+1,\dots,n\end{cases},
$$
whence
$$
\varphi(z)=\left(T(z_1,\dots,z_k),\zeta_{k+1}z_{k+\omega(1)},\dots,\zeta_nz_{k+\omega(n-k)}\right),
$$
where $T=(T_1,\dots,T_k)\in\Aut(\mathbb B_k)$, $\zeta_j\in\mathbb T$, $j>k$, and $\omega\in\Sigma_{n-k}(\tilde p_{k+1},\dots,\tilde p_n)$.

Let
\begin{align*}
\mathbb F^0_{p,q}&:=\mathbb F_{p,q}\cap\left(\mathbb C^n\times\{0\}^{\tau(1)-1}\times\mathbb C\times\{0\}^{m-\tau(1)}\right),\\
\mathbb F^0_{\tilde p,q_{\tau}/t}&:=\mathbb F_{\tilde p,q_{\tau}/t}\cap\left(\mathbb C^{n+1}\times\{0\}^{m-1}\right).
\end{align*}

Let $\Phi\in\Aut(\mathbb F_{\tilde p,q_{\tau}/t})$ be defined by
$$
\Phi(z,w):=\left(z,U^{-1}(w_1,\dots,w_l),w_{l+1},\dots,w_m\right)
$$
and let
$$
\hat\xi_1:=\begin{cases}\xi_1,\quad&\textnormal{if }l=0\\1,\quad&\textnormal{if }l>0\end{cases},\qquad\hat q_1:=\begin{cases}\tilde q_1,\quad&\textnormal{if }l=0\\1,\quad&\textnormal{if }l>0\end{cases}.
$$
Then $\Phi\circ(G,\psi\circ \Psi_t\circ\tau):\mathbb F^0_{p,q}\longrightarrow\mathbb F^0_{\tilde p,q_{\tau}/t}$ is proper and holomorphic with
\begin{equation}\label{eq:phi1}
(\Phi\circ(G,\psi\circ\Psi_t\circ\tau))(z,w)=\left(G(z,w),\hat\xi_1w_{\tau(1)}^{q_{\tau(1)}/\hat q_1},0,\dots,0\right),\quad (z,w)\in\mathbb F^0_{p,q}.
\end{equation}

It follows from Theorem~\ref{thm:n2m1}~(\ref{item:n2m1form}) that
\begin{equation}\label{eq:phi2}
(\Phi\circ(G,\psi\circ\Psi_t\circ\tau))(z,w)=\left(\hat G(z,w),\eta w_{\tau(1)}^r,0,\dots,0\right),\quad (z,w)\in\mathbb F^0_{p,q},
\end{equation}
where $\hat G=(\hat G_1,\dots,\hat G_n)$,
$$
\hat G_j(z,w):=w_{\tau(1)}^{r\hat q_1/\tilde p_j}\hat f_j\left(z_1w_{\tau(1)}^{-q_{\tau(1)}/p_1},\dots,z_nw_{\tau(1)}^{-q_{\tau(1)}/p_n}\right),\quad j=1,\dots,n,
$$
$\eta\in\mathbb T$, $r\in\mathbb N$, and $\hat f:=(\hat f_1,\dots,\hat f_n):\mathbb E_p\longrightarrow\mathbb E_{\tilde p}$ is proper and holomorphic, i.e.
\begin{equation}\label{eq:hatf}
\hat f=\Psi_{p_{\hat\sigma}/(\tilde p\hat s)}\circ\hat\varphi\circ\Psi_{\hat s}\circ\hat\sigma
\end{equation}
for some $\hat\sigma\in\Sigma_n$ with $p_{\hat\sigma}/\tilde p\in\mathbb N^n$, $\hat s\in\mathbb N^n$ with $p_{\hat\sigma}/(\tilde p\hat s)\in\mathbb N^n$, and $\hat\varphi\in\Aut(\mathbb E_{p_{\hat\sigma}/\hat s})$. Again, it follows from the proof of Theorem~\ref{thm:n2m1}~(\ref{item:n2m1form}) that
$$
\frac{p_{\hat\sigma(j)}}{\hat s_j}=\begin{cases}1,\quad&\textnormal{if }j=1,\dots,k\\\tilde p_j,\quad&\textnormal{if }j=k+1,\dots,n\end{cases},
$$
whence
$$
\hat\varphi(z)=(\hat T(z_1,\dots,z_k),\hat\zeta_{k+1}z_{k+\hat\omega(1)},\dots,\hat\zeta_nz_{k+\hat\omega(n-k)}),
$$
where $\hat T=(\hat T_1,\dots,\hat T_k)\in\Aut(\mathbb B_k)$, $\hat\zeta_j\in\mathbb T$, $j>k$, and $\hat \omega\in\Sigma_{n-k}(\tilde p_{k+1},\dots,\tilde p_n)$.

From (\ref{eq:phi2}) we infer that
\begin{multline}\label{eq:hatG2}
(\Phi\circ(G,\psi\circ\Psi_t\circ\tau))(z,w)=\\
\left(\hat G(z,w)+\alpha(z,w),w_{\tau(1)}^{q_{\tau(1)}},\dots,w_{\tau(l)}^{q_{\tau(l)}},\xi_{l+1}w_{\tau(l+1)}^{q_{\tau(l+1)}/\tilde q_{l+1}},\dots,\xi_mw_{\tau(m)}^{q_{\tau(m)}/\tilde q_m}\right),
\end{multline}
for $(z,w)\in\mathbb F_{p,q}$ with $w_{\tau(1)}\neq0$, where $\alpha$ is holomorphic on $\mathbb F_{p,q}$ with $\alpha|_{\mathbb F^0_{p,q}}=0$. Comparing (\ref{eq:phi1}) and (\ref{eq:phi2}) we conclude that
$$
\eta=\hat\xi_1,\quad r=q_{\tau(1)}/\hat q_1.
$$
Since the mapping on the left side of (\ref{eq:hatG2}) is holomorphic on $\mathbb F_{p,q}$, the functions
$$
\hat G_j(z,w)=\begin{cases}w_{\tau(1)}^{q_{\tau(1)}/\tilde p_j}\hat T_j^{1/\tilde p_j}\left(z_{\hat\sigma(1)}^{p_{\hat\sigma(1)}}w_{\tau(1)}^{-q_{\tau(1)}},\dots,z_{\hat\sigma(k)}^{p_{\hat\sigma(k)}}w_{\tau(1)}^{-q_{\tau(1)}}\right),\hfill&\textnormal{if }j\leq k\\\hat\zeta_jz_{\hat\sigma(j)}^{p_{\hat\sigma(j)}/\tilde p_j},\hfill&\textnormal{if }j>k\end{cases},
$$
are holomorphic on $\mathbb F_{p,q}$, too. Since $m\geq2$, it may happen $w_{\tau(1)}=0$. Consequently, $\hat T\in\mathbb U(k)$ and
$$
\hat G_j(z,w)=\begin{cases}\hat T_j^{1/\tilde p_j}\left(z_{\hat\sigma(1)}^{p_{\hat\sigma(1)}},\dots,z_{\hat\sigma(k)}^{p_{\hat\sigma(k)}}\right),\hfill&\textnormal{if }j\leq k\\ \hat\zeta_jz_{\hat\sigma(j)}^{p_{\hat\sigma(j)}/\tilde p_j},\hfill&\textnormal{if }j>k\end{cases}.
$$

Recall that $\hat G+\alpha=G$ and fix $w\in\{0\}^{\tau(1)-1}\times\mathbb C\times\{0\}^{m-\tau(1)}$ with $0<\rho_w<1$. Then $\rho_w=|w_{\tau(1)}|^{2q_{\tau(1)}}$ and it follows from (\ref{eq:fg}) and (\ref{eq:f}) that
\begin{equation*}
g_j(z)=\begin{cases}|w_{\tau(1)}|^{q_{\tau(1)}/\tilde p_j}T_j^{1/\tilde p_j}\left(z_{\sigma(1)}^{p_{\sigma(1)}}|w_{\tau(1)}|^{-q_{\tau(1)}},\dots, z_{\sigma(k)}^{p_{\sigma(k)}}|w_{\tau(1)}|^{-q_{\tau(1)}}\right),\hfill&\textnormal{if }j\leq k\\ \zeta_jz_{\sigma(j)}^{p_{\sigma(j)}/\tilde p_j},\hfill&\textnormal{if }j>k\end{cases},
\end{equation*}
From the equality $\hat G(\cdot,w)=g$ on $\mathbb E_{p,q}(w)$ one has $\zeta_j=\hat\zeta_j$, $j>k$, and, losing no generality, we conclude that $\sigma=\hat\sigma$, $s=\hat s$, and $T=\hat T$. Consequently, $g$ does not depend on $w$ and $g(0)=0$.
\end{proof}

\bibliographystyle{amsplain}
\bibliography{bib_pz}

\providecommand{\bysame}{\leavevmode\hbox to3em{\hrulefill}\thinspace}
\providecommand{\MR}{\relax\ifhmode\unskip\space\fi MR }
\providecommand{\MRhref}[2]{%
  \href{http://www.ams.org/mathscinet-getitem?mr=#1}{#2}
}
\providecommand{\href}[2]{#2}
\begin{thebibliography}{10}

\bibitem{andreotti1964249}
A.~Andreotti and E.~Vesentini, \emph{On deformations of discontinuous groups},
  Acta Math. \textbf{112} (1964), 249--298.

\bibitem{chen200474}
Z.~H. Chen, \emph{Proper holomorphic mappings between some generalized hartogs
  triangles}, Geometric function theory in several complex variables.
  Proceedings of a satellite conference to the international congress of
  mathematicians, ICM-2002, Beijing, China, August 30--September 2, 2002 (C.~H.
  Fitzgerald and S.~Gong, eds.), River Edge, NJ: World Scientific, 2004,
  pp.~74--81.

\bibitem{chen2003215}
Z.~H. Chen and Y.~Liu, \emph{{A note on the classification of the proper
  mappings between some generalized Hartogs triangles}}, Chin. J. Contemp.
  Math. \textbf{24} (2003), no.~3, 215--220.

\bibitem{chen2008557}
\bysame, \emph{The classification of proper holomorphic mappings between
  special {Hartogs} triangles of different dimensions}, Chin. Ann. Math. Ser. B
  \textbf{29} (2008), no.~5, 557--566.

\bibitem{chen2001177}
Z.~H. Chen and D.~K. Xu, \emph{Proper holomorphic mappings between some
  nonsmooth domains}, Chin. Ann. Math. Ser. B \textbf{22} (2001), no.~2,
  177--182.

\bibitem{chen2002357}
\bysame, \emph{Rigidity of proper self-mapping on some kinds of generalized
  {Hartogs} triangle}, Acta Math. Sin. (Engl. Ser.) \textbf{18} (2002), no.~2,
  357--362.

\bibitem{dini1991219}
G.~Dini and A.~Selvaggi Primicerio, \emph{Proper holomorphic mappings between
  generalized pseudoellipsoids}, Ann. Mat. Pura Appl. (4) \textbf{158} (1991),
  219--229.

\bibitem{isaev200633}
A.~V. Isaev and N.~G. Kruzhilin, \emph{{Proper holomorphic maps between
  Reinhardt domains in $\mathbb C^2$}}, Michigan Math. J. \textbf{54} (2006),
  no.~1, 33--64.

\bibitem{jarnicki2008}
M.~Jarnicki and P.~Pflug, \emph{First steps in several complex variables:
  {Reinhardt} domains}, EMS Textbooks in Mathematics, European Mathematical
  Society Publishing House, 2008.

\bibitem{kosinski2009711}
{\L.~Kosi\'nski}, \emph{{Proper holomorphic mappings between Reinhardt domains
  in $\mathbb C^2$}}, Michigan Math. J. \textbf{58} (2009), no.~3, 711--721.

\bibitem{landucci1984807}
{M.~Landucci}, \emph{On the proper holomorphic equivalence for a class of
  pseudoconvex domains}, Trans. Amer. Math. Soc. \textbf{282} (1984), 807--811.

\bibitem{landucci1989193}
\bysame, \emph{Proper holomorphic mappings between some nonsmooth domains},
  Ann. Mat. Pura Appl. (4) \textbf{155} (1989), 193--203.

\bibitem{stein1972}
E.~M. Stein, \emph{Boundary behavior of holomorphic functions of several
  complex variables}, Mathematical Notes, Princeton University Press, New
  Jersey, 1972.

\bibitem{tu20021035}
Z.~H. Tu, \emph{{Rigidity of proper holomorphic mappings between
  equidimensional bounded symmetric domains}}, Proc. Amer. Math. Soc.
  \textbf{130} (2002), no.~4, 1035--1042.

\bibitem{tu200213}
\bysame, \emph{{Rigidity of proper holomorphic mappings between
  nonequidimensional bounded symmetric domains}}, Math. Z. \textbf{240} (2002),
  no.~1, 13--35.

\bibitem{tu2004310}
\bysame, \emph{Rigidity of proper holomorphic mappings between bounded
  symmetric domains}, Geometric function theory in several complex variables.
  Proceedings of a satellite conference to the international congress of
  mathematicians, ICM-2002, Beijing, China, August 30--September 2, 2002 (C.~H.
  Fitzgerald and S.~Gong, eds.), River Edge, NJ: World Scientific, 2004,
  pp.~310--316.

\bibitem{tu2014703}
Z.~H. Tu and L.~Wang, \emph{{Rigidity of proper holomorphic mappings between
  certain unbounded non-hyperbolic domains}}, J. Math. Anal. Appl. \textbf{419}
  (2014), no.~2, 703--714.

\bibitem{tu20151}
\bysame, \emph{{Rigidity of proper holomorphic mappings between equidimensional
  Hua domains}}, Math. Ann. \textbf{363} (2015), no.~1--2, 1--34.

\end{thebibliography}

\end{document}